\newcommand{\lsim}{\raisebox{-0.13cm}{~\shortstack{$<$ \\[-0.07cm]
      $\sim$}}~}
\newcommand{\gsim}{\raisebox{-0.13cm}{~\shortstack{$>$ \\[-0.07cm]
      $\sim$}}~}
\newtheorem{lemma}{Lemma}
\newtheorem{theorem}{Theorem}
\newtheorem{corollary}{Corollary}
\begin{document}
\title{Non-uniform Dependence for Euler Equations \\ in Besov Spaces}
\author{Jos\'e Pastrana}
\address{255 Hurley, Department of Mathematics, University of Notre Dame, Notre Dame, Indiana 46556-4618, USA}
\email{jpastran@nd.edu}
\begin{abstract}
We prove the non-uniform continuity of the data-to-solution map of the incompressible Euler equations in Besov spaces, where the parameters $p, q$ and $s$ considered here are such that the local existence and uniqueness result holds.
\end{abstract}
\maketitle
\section{Introduction}
We consider the Cauchy problem governing the motion of a non-viscous and incompressible fluid in a domain $\Omega \subseteq \mathbb{R}^d, d\ge 2$
\begin{equation}\label{eqn:EE}
        \begin{aligned}
        	\partial_t u  + (u \cdot \nabla)u +\nabla p & = 0,\\
		\text{div}\; u & = 0. \\
		u(0,x) & = u_0(x), \; x \in \Omega ,
        \end{aligned}
\end{equation}
where $u:\mathbb{R}\times \Omega \to \mathbb{R}^d$ is the velocity field, $p:\mathbb{R}\times \Omega \to \mathbb{R}$ is the pressure function and $u_0:\Omega \to \mathbb{R}^d$ is the divergence free initial velocity. 

A Cauchy problem is said to be well-posed in the sense of Hadamard if given an initial data one can show the existence of a unique solution that depends continuously on the initial data. Our main concern here is with the local, in time, theory. We first generalize the periodic result of Himonas and Misio\l{}ek \cite{HM} and show that continuous dependence on the initial data in the Eulerian coordinates is the best result one can expect for Besov spaces on $\mathbb{T}^d$.
\begin{theorem}\label{main1}
Let $s \in \mathbb{R}$, $1\le p,q \le \infty$ and $d=2, 3$. The data-to-solution map of the incompressible Euler equations \eqref{eqn:EE} is not uniformly continuous from the unit ball in $B_{p,q}^{s}(\mathbb{T}^d)$ into $C \left( [0,T] , B_{p,q}^{s}(\mathbb{T}^d,\mathbb{R}^d) \right)$.
\end{theorem}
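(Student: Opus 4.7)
The plan is to follow the Himonas-Misio\l{}ek paradigm and construct two families $(u_0^{n,+})_n, (u_0^{n,-})_n$ of initial data in a fixed ball of $B^s_{p,q}(\mathbb{T}^d,\mathbb{R}^d)$ such that $\|u_0^{n,+} - u_0^{n,-}\|_{B^s_{p,q}} \to 0$ but the corresponding solutions $u^{n,\pm}$ satisfy $\liminf_n \|u^{n,+}(t) - u^{n,-}(t)\|_{B^s_{p,q}} > 0$ for some $t \in (0,T]$. The key simplification in the Euler setting on $\mathbb{T}^d$ is that one can exhibit \emph{exact}, not merely approximate, solutions of this form, so the analysis reduces to Littlewood--Paley norm computations.

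Concretely, for $d=2$ (the case $d=3$ follows by trivial extension in $x_3$) I would set
\begin{equation*}
u^{n,\pm}(t,x) \;=\; \frac{1}{n}\, e_1 \pm \frac{1}{n}\, e_2 + n^{-s}\cos(nx_2 \mp t)\, e_1.
\end{equation*}
Each field is divergence free since its only non-constant component depends on $x_2$ alone, and a direct computation shows $\partial_t u^{n,\pm} + (u^{n,\pm}\cdot\nabla)u^{n,\pm} = 0$: in the first component $\partial_t$ contributes $\pm n^{-s}\sin(\cdot)$, which is cancelled exactly by the transport term $u^{n,\pm,2}\partial_{x_2} u^{n,\pm,1} = \mp n^{-s}\sin(\cdot)$; the other components vanish. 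Thus $(u^{n,\pm}, 0)$ solves \eqref{eqn:EE}, and by the assumed uniqueness it is the solution emanating from $u_0^{n,\pm} := u^{n,\pm}(0,\cdot)$. Since $\cos(nx_2)$ has Fourier support in a single Littlewood-Paley block of scale $2^j \approx n$, one has $\|n^{-s}\cos(nx_2)\|_{B^s_{p,q}} \approx 1$, while the constant contribution is $O(1/n)$; hence $\|u_0^{n,\pm}\|_{B^s_{p,q}} \le C$ uniformly in $n$, and an overall rescaling by $1/C$ (combined with the scaling symmetry $u \mapsto \lambda u(\lambda\,\cdot\,,\,\cdot\,)$ of \eqref{eqn:EE}) places the data in the unit ball without altering any qualitative conclusion below.

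The initial difference is $u_0^{n,+} - u_0^{n,-} = (0, 2/n)$, whose $B^s_{p,q}$ norm is $\lesssim 1/n \to 0$. At positive time, the identity $\cos(nx_2 - t) - \cos(nx_2 + t) = 2\sin(nx_2)\sin t$ gives
\begin{equation*}
u^{n,+}(t) - u^{n,-}(t) \;=\; 2 n^{-s}\sin(nx_2)\sin(t)\, e_1 + (0, 2/n),
\end{equation*}
and the two-sided estimate $\|\sin(nx_2)\|_{B^s_{p,q}(\mathbb{T}^d)} \approx n^s$ yields $\|u^{n,+}(t) - u^{n,-}(t)\|_{B^s_{p,q}} \gtrsim |\sin t|$ uniformly in $n$ for $n$ large. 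Choosing any fixed $t_0 \in (0,T]$ with $\sin t_0 \ne 0$ then contradicts uniform continuity. The only step requiring genuine care is the lower bound $\|\sin(nx_2)\|_{B^s_{p,q}} \gtrsim n^s$, which must be extracted from the Littlewood-Paley definition on $\mathbb{T}^d$ across the full range $1 \le p, q \le \infty$ (paying attention to the $q=\infty$ case where the norm is a supremum); apart from this bookkeeping, the use of exact solutions removes the commutator and approximation-error work that usually constitutes the bulk of such non-uniform dependence arguments.
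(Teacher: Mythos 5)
Your proposal is correct and follows essentially the same route as the paper: exact oscillatory solutions in the spirit of Himonas--Misio\l{}ek, checked against the three conditions (boundedness, convergence at $t=0$, separation $\gsim \sin t$ for $t>0$), with the entire analytic burden carried by the two-sided estimate $\|n^{-s}\cos(n\cdot - a)\|_{B_{p,q}^{s}(\mathbb{T})}\simeq 1$ (the paper's Lemma~\ref{tlemp}), which you correctly flag as the only step needing care across all $1\le p,q\le\infty$. The sole difference is your ansatz --- the pressureless shear flow $\left(\tfrac1n + n^{-s}\cos(nx_2\mp t),\ \pm\tfrac1n\right)$ instead of the paper's two-component family with nonvanishing pressure --- which is an equally valid exact solution and, if anything, marginally simpler to verify.
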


Liu and Tang \cite{LT} extended the periodic result in \cite{HM} to $B_{2,r}^{s}(\mathbb{T}^d)$ for $1\le r \le \infty$. We further extend their result to include the cases $p\not=2$ and use similar scales as in \cite{LT} for the non-periodic case. Recently, in the special case $p=q=\infty$, Misio\l{}ek and Yoneda \cite{MY} have shown that the solution map for the Euler equations is not even continuous in the space of H\"older continuous functions, and thus not locally Hadamard well-posed in $B_{\infty,\infty}^{1+\sigma} \simeq C^{1,\sigma}$ unless restricted to the little H\"older subspace $c^{1,\sigma}$.  More precisely, they showed that the incompressible Euler equations \eqref{eqn:EE} are locally well-posed in the sense of Hadamard in $c^{1,\sigma}(\mathbb{R}^d)$ for any $0 < \sigma < 1$ and $d=2, 3$, where $c^{1,\sigma}$ is the closure of the $C^{\infty}$ functions with respect to the  H\"older norm, see \cite{BL}.

As a consequence to Theorem \ref{main1} we have the following result.
\begin{corollary}\label{main2}
The data-to-solution map of the incompressible Euler equations \eqref{eqn:EE} is not uniformly continuous from the unit ball in $c^{1,\sigma}(\mathbb{T}^d)$ into $C \left( [0,T] , c^{1,\sigma}(\mathbb{T}^d,\mathbb{R}^d) \right)$ for any $0 < \sigma <1$ and $d=2, 3$.
\end{corollary}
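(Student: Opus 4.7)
The plan is to deduce Corollary \ref{main2} by applying Theorem \ref{main1} in the extreme case $p=q=\infty$ and $s=1+\sigma$, and then transferring the non-uniform dependence from $B_{\infty,\infty}^{1+\sigma}(\mathbb{T}^d)$ to its closed subspace $c^{1,\sigma}(\mathbb{T}^d)$. The first step is to observe that the two approximating sequences of initial data $u_n^{1}(0)$, $u_n^{2}(0)$ constructed in the proof of Theorem \ref{main1} are finite trigonometric polynomials, hence elements of $C^{\infty}(\mathbb{T}^d)$, and in particular of $c^{1,\sigma}(\mathbb{T}^d)$, since by definition $c^{1,\sigma}$ is the closure of $C^{\infty}$ in the H\"older norm.

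Next I would invoke the standard identification $B_{\infty,\infty}^{1+\sigma}(\mathbb{T}^d)\simeq C^{1,\sigma}(\mathbb{T}^d)$ with equivalent norms for $0<\sigma<1$. Restricted to $c^{1,\sigma}$, the Besov and little H\"older norms are therefore equivalent, so the uniform bound $\|u_n^{i}(0)\|_{B_{\infty,\infty}^{1+\sigma}}\le 1$ translates to a uniform bound in $c^{1,\sigma}$, and $\|u_n^{1}(0)-u_n^{2}(0)\|_{c^{1,\sigma}}\to 0$ as $n\to\infty$.

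Then I would appeal to the local well-posedness of \eqref{eqn:EE} in $c^{1,\sigma}(\mathbb{T}^d)$, which follows by adapting the Misio\l{}ek--Yoneda argument \cite{MY} to the torus. By uniqueness, the corresponding solutions $u_n^{i}(t)$ remain in $c^{1,\sigma}(\mathbb{T}^d)$ for $t\in[0,T]$ and coincide with the Besov-valued solutions produced by Theorem \ref{main1}. Hence the quantitative positive lower bound on $\|u_n^{1}(t)-u_n^{2}(t)\|_{B_{\infty,\infty}^{1+\sigma}}$ at a fixed $t\in(0,T]$ supplied by the proof of Theorem \ref{main1} carries over verbatim to the $c^{1,\sigma}$ norm, contradicting uniform continuity of the data-to-solution map on the unit ball of $c^{1,\sigma}(\mathbb{T}^d)$.

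The main subtlety is guaranteeing that the approximating solutions really lie in $c^{1,\sigma}$ and not merely in $C^{1,\sigma}$. This follows because smooth initial data yield smooth, hence little-H\"older, solutions on a short time interval via the classical Lagrangian/Kato theory, which combined with the Misio\l{}ek--Yoneda continuity of the solution map in the closed subspace $c^{1,\sigma}$ ensures the corresponding orbits stay in $c^{1,\sigma}$. Once this is granted, the argument reduces to a direct transfer of the Besov estimates already obtained in Theorem \ref{main1}.
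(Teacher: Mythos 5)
Your proposal is correct and follows the same overall skeleton as the paper: specialize Theorem \ref{main1} to $p=q=\infty$, $s=1+\sigma$, use the norm identification $B_{\infty,\infty}^{1+\sigma}\simeq C^{1,\sigma}$, and reduce everything to checking that the sequences of solutions actually lie in the closed subspace $c^{1,\sigma}$. Where you diverge is in how that last membership claim is verified. The paper exploits the fact that the solutions used in the proof of Theorem \ref{main1} are given by an explicit trigonometric formula for \emph{every} $t$, not just at $t=0$, and checks the defining vanishing condition \eqref{eqn:vcond} directly: the mean value theorem gives $|\partial^\alpha u(x)-\partial^\alpha u(y)|\le n^{1-\sigma}|x-y|$, hence the H\"older quotient is bounded by $n^{1-\sigma}h^{1-\sigma}\to 0$ as $h\to 0$. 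You instead route the argument through local well-posedness of the Euler equations in $c^{1,\sigma}$ (adapting Misio\l{}ek--Yoneda to the torus) together with persistence of smoothness and uniqueness. Your route is sound --- uniqueness holds in a class containing both solutions, and smooth data do give smooth orbits --- but it imports machinery that is not needed here: since $u^{\pm1,n}(t,\cdot)$ is a finite trigonometric expression at each time, it is $C^\infty$ and hence in $c^{1,\sigma}$ with no appeal to PDE theory at all. The paper's direct computation is the more elementary and self-contained of the two; your version has the modest advantage of not depending on the specific form of the solutions, so it would survive if the explicit family were replaced by one known only through its initial data.
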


For the non-periodic case we have the following result.
\begin{theorem}\label{main3}
Let $s>1 + d/2$, $2 \le q \le \infty$ and $d=2, 3$. The data-to-solution map of the incompressible Euler equations \eqref{eqn:EE} is not uniformly continuous from the unit ball in $B_{2,q}^{s}(\mathbb{R}^d)$ into $C \left( [0,T] , B_{2,q}^{s}(\mathbb{R}^d,\mathbb{R}^d) \right)$.
\end{theorem}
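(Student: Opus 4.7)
The plan is to adapt the approximate-solution strategy of Liu--Tang \cite{LT} from the torus to the whole space, using an analogous scaling: a low-frequency drift of amplitude $n^{-1}$ and a high-frequency bump of amplitude $2^{-ns}$ localized in the dyadic shell $|\xi|\sim 2^n$. I will exhibit two sequences of divergence-free initial data $(u_0^n), (v_0^n)$ bounded in $B^s_{2,q}(\mathbb{R}^d)$ with $\|u_0^n - v_0^n\|_{B^s_{2,q}} \to 0$ but $\liminf_n \|u^n(t) - v^n(t)\|_{B^s_{2,q}} \ge c > 0$ for each $t$ in a common interval of existence.

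Concretely, let $\omega \in C_c^\infty(\mathbb{R}^d;\mathbb{R}^d)$ be smooth, compactly supported, and divergence-free (obtained as $\nabla^\perp\psi$ in $d=2$ or $\operatorname{curl}\Psi$ in $d=3$) with $\omega_1\not\equiv 0$, and let $\phi\in C_c^\infty(\mathbb{R}^d)$ be a scalar bump. Define
$$u_0^n = n^{-1}\omega + 2^{-ns}\mathbb{P}\bigl(\phi(x)\cos(2^n x_1)\,e_2\bigr), \qquad v_0^n = -n^{-1}\omega + 2^{-ns}\mathbb{P}\bigl(\phi(x)\cos(2^n x_1)\,e_2\bigr),$$
where $\mathbb{P}$ is the Leray projector. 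The Littlewood--Paley characterization of $B^s_{2,q}$ together with the boundedness of $\mathbb{P}$ gives uniform bounds on both sequences, and $\|u_0^n - v_0^n\|_{B^s_{2,q}} = 2 n^{-1}\|\omega\|_{B^s_{2,q}}\to 0$. The hypothesis $s > 1+d/2$ furnishes the embedding $B^s_{2,q}\hookrightarrow C^1$, so the standard local well-posedness theory produces a uniform time $T>0$ on which $u^n, v^n$ remain bounded in $B^s_{2,q}$.

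The main step is the construction of approximate solutions $u^n_{\mathrm{app}}, v^n_{\mathrm{app}}$ that capture the advection of the high-frequency bump by the drift $\pm n^{-1}\omega$: set $u^n_{\mathrm{app}}(t,x) = \pm n^{-1}\omega(x) + \tilde\Phi^\pm_n(t,x)$, where $\tilde\Phi^\pm_n$ is the pull-back of the initial oscillation under the Lagrangian flow of $\pm n^{-1}\omega$ (with a Leray correction to preserve $\nabla\cdot=0$). By construction the leading transport operator $\partial_t + (\pm n^{-1}\omega)\cdot\nabla$ annihilates $\tilde\Phi^\pm_n$, so the Euler residual reduces to the self-interactions $n^{-2}(\omega\cdot\nabla)\omega$, $n^{-1}(\tilde\Phi^\pm_n\cdot\nabla)\omega$, and $(\tilde\Phi^\pm_n\cdot\nabla)\tilde\Phi^\pm_n$, together with the associated pressure. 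Bony's paraproduct decomposition combined with the amplitude $2^{-ns}$ and the localization near $|\xi|\sim 2^n$ shows each contribution has $B^s_{2,q}$ norm $o(1)$. A Gronwall argument applied to the commutator form of the error equation for $u^n - u^n_{\mathrm{app}}$ (as in the local well-posedness proof) then yields $\sup_{[0,T]}\|u^n - u^n_{\mathrm{app}}\|_{B^s_{2,q}} \to 0$, and likewise for $v^n$.

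For the lower bound, using the Lagrangian expansion $X^\pm_{-t}(x) = x \mp tn^{-1}\omega(x) + O(t^2n^{-2})$ and the identity $\cos A - \cos B = -2\sin\frac{A+B}{2}\sin\frac{A-B}{2}$ one finds
$$u^n_{\mathrm{app}}(t) - v^n_{\mathrm{app}}(t) = 2n^{-1}\omega + 2\cdot 2^{-ns}\mathbb{P}\bigl(\phi(x)\sin(2^n x_1)\sin(2^n t n^{-1}\omega_1(x))\,e_2\bigr) + \text{l.o.t.}$$
The second term is frequency-localized near $|\xi|\sim 2^n$, so its $B^s_{2,q}$ norm is comparable to the $L^2$ norm of $\phi(x)\sin(2^n x_1)\sin(2^n t n^{-1}\omega_1(x))$; a Riemann--Lebesgue/averaging argument on $\mathrm{supp}(\phi)$ shows this $L^2$ norm tends to $\tfrac12\|\phi\|_{L^2}$ as $n\to\infty$ whenever $t > 0$. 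The triangle inequality combined with the approximation estimate then completes the proof. I expect the principal obstacle to be the residual bound: each derivative falling on the oscillatory phase costs a factor of $2^n$ that must be absorbed by the $2^{-ns}$ amplitude through paradifferential calculus, and one must further verify that the Lagrangian composition together with the Leray correction does not leak energy out of the shell $|\xi|\sim 2^n$ with coefficients exceeding $2^{-ns}$ in magnitude.
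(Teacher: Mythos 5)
Your overall strategy (low--frequency drift plus high--frequency oscillation, approximate solutions, residual estimates, then triangle inequality) is the same one the paper uses, but there is a genuine gap at the decisive step: the claim that ``a Gronwall argument \ldots yields $\sup_{[0,T]}\|u^n-u^n_{\mathrm{app}}\|_{B^s_{2,q}}\to 0$.'' The error equation for $w=u^n-u^n_{\mathrm{app}}$ contains the term $(w\cdot\nabla)u^n_{\mathrm{app}}$, and any product or commutator estimate for it in $B^s_{2,q}$ produces a contribution of the form $\|w\|_{L^\infty}\,\|u^n_{\mathrm{app}}\|_{B^{s+1}_{2,q}}$; since one derivative on the oscillatory bump costs a factor $2^n$, we have $\|u^n_{\mathrm{app}}\|_{B^{s+1}_{2,q}}\simeq 2^n$, and with only the a priori bound $\|w\|_{L^\infty}\lsim\|w\|_{B^s_{2,q}}$ the Gronwall constant is $O(2^n)$ and the resulting bound $e^{C2^nt}$ is useless. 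This is not an artifact of the local well-posedness proof: there the solution map is Lipschitz only with a loss of one derivative, for exactly this reason. The paper circumvents it with a two-tier argument that your proposal is missing: the transport estimate (Lemma \ref{trans}) and Gr\"onwall are run in a \emph{lower} norm $B^{\sigma}_{2,q}$ with $1<\sigma<\min\{2,s-1\}$, where $\|u^{\omega,\lambda}\|_{B^{\sigma+1}_{2,q}}\lsim \lambda^{-1+\delta}+\lambda^{-s+\sigma+1}$ is bounded (indeed small), giving $\|w\|_{B^\sigma_{2,q}}\lsim d_1(\lambda)$; this is then combined with the crude bound $\|w\|_{B^k_{2,q}}\lsim\lambda^{k-s}$ for $k>s$ and the interpolation inequality \eqref{eqn:interp} to conclude smallness in $B^s_{2,q}$. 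Without this detour through $\sigma<s-1$ and interpolation, the approximation step does not close.

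Two further points deserve attention. First, your drift $n^{-1}\omega(x)$ is not constant on the support of the bump, so the transported phase difference is $2^{n}tn^{-1}\omega_1(x)$ \emph{plus} higher-order flow corrections of size $2^{n}t^2n^{-2},\,2^nt^3n^{-3},\dots$, all of which are unbounded in $n$; the ``l.o.t.'' dismissal and the clean $\cos A-\cos B$ computation are therefore not justified as written, and the averaging argument must also rule out resonance with the carrier phase $2^nx_1$ and degeneracy where $\nabla\omega_1=0$. The paper avoids all of this by choosing $\psi_1'=\psi_2=1$ on $\mathrm{supp}\,\phi$, so that the low-frequency velocity is exactly the constant $(0,\omega\lambda^{-1})$ where the bump lives, the transported phase is exactly $\lambda x_2-\omega t$, and the separation is the clean factor $\sin t$. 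Second, the two-sided bound $\|\phi(\cdot)\cos(2^n x_1)\|_{B^s_{2,q}}\simeq 2^{ns}\|\phi\|_{L^2}$ is not automatic, since the bump is only approximately frequency-localized; this is the content of the paper's Lemma \ref{tlemnp}, whose upper bound uses the embedding $H^{\sigma}\hookrightarrow B^{\sigma}_{2,q}$ and is precisely where the hypothesis $2\le q\le\infty$ enters --- your proposal never identifies where that restriction would be used.
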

The proof of this result uses estimates for solutions to a linear transport equation.

For background on well-posedness of the Euler equations we refer the reader to the monographs of Bahouri, Chemin and Danchin \cite{BCD}, Bertozzi and Majda \cite{MB} and Chemin \cite{C}. The first rigorous results in this direction were proved in the framework of H\"older spaces by  Gyunter \cite{G}, Lichtenstein \cite{L} and Wolibner \cite{W}, and subsequently by Kato \cite{K2}, Yudovich \cite{Y}, Ebin and Marsden \cite{EM} and others. Concerning the properties of the data-to-solution map, the first results can be found in \cite{EM} and in the work of Kato \cite{K}, who among other things showed that the solution map for Burgers' equation is not H\"older continuous in $H^s$ topology for any H\"older exponent. Further continuity results for the solution map of the Euler equations were obtained in $H^s$ by Kato and Lai \cite{KL}, and subsequently in $W_p^s$ by Kato and Ponce \cite{KP}. Ever since, the subject has become an active area of research involving many non-linear evolution equations.

 It is worth pointing out in passing that Pak and Park \cite{PP} established existence and uniqueness of solutions of the Euler equations in $B_{\infty,1}^{1}$ and showed that the solution map is in fact Lipschitz continuous when viewed as a map between $B_{\infty,1}^{0}$ and  $C ( [0,\infty]; B_{\infty,1}^{0})$. Later, Cheskidov and Shvydkoy \cite{CS} proved that the solution of the Euler equations cannot be continuous as a function of the time variable at $t = 0$ in the spaces $B_{r,\infty}^{s}(\mathbb{T}^d)$ where $s >0$ if $2 < r \le \infty$ and $s > d(2/r-1)$ if $1 \le r \le 2$. Furthermore, Bourgain and Li \cite{BL1,BL2} showed that for $d = 2, 3$ the Euler equations are strongly ill-posed in the Sobolev space $W^{d/p+1,p}$ for any $1 \le p < \infty$, the Besov space $B_{p,q}^{d/p+1}$ for any $1 \le p < \infty$, $1 < q \le \infty$, and in the classical spaces $C^{m}(\mathbb{R}^d)$ and $C^{m-1,1}(\mathbb{R}^d)$ for any integer $m\ge 1$. Most recently, Holmes, Keyfitz and Tiglay \cite{HKT} proved the non-uniform continuity of the solution map for compressible gas in the Sobolev spaces, and Holmes and Tiglay \cite{HT} have obtained  similar results for the Hunter Saxton equation in Besov spaces.
\section{Preliminaries}\label{prelim}
In this section we introduce notation and preliminary results. The solutions that we consider take values in the Besov spaces $B_{p,q}^{s}$. Given a smooth bump function $\varphi_0$ supported in the ball  of radius 2 and equal to one in the ball of radius 1, we let $\varphi_1(\xi) = \varphi_0(\xi/2) - \varphi_0(\xi)$ and set $\varphi_j(\xi) = \varphi_1(\xi/2^{j-1})$ for all $j \in \mathbb{N}$.  Any such dyadic partition of unity defines a frequency restriction operator on functions, $\varphi_j(D)$, via the relation $\mathcal{F}(\varphi_j(D)f)(\xi) := \varphi_j(\xi)\hat f(\xi)$. In what follows we will let $X$ denote either $\mathbb{T}^d$ or $\mathbb{R}^d$ and let $Y'(X)$ denote either the space of linear functionals on $C^\infty(\mathbb{T}^d)$ or the space of tempered distributions $\mathcal{S}'(\mathbb{R}^d)$.

For any $s\in \mathbb{R}$ and $ 1 \le p,q \le \infty$ we define the vector-valued distributions
$$ B_{p,q}^{s}(X):= \left\{f :   f \in Y'(X) \; \;  \text{and} \;\; \|f\|_{ B_{p,q}^{s}(X)}  < \infty  \right\},$$
with norm as in \cite{BCD} and \cite{ST} given by
$$ \|f\|_{B_{p,q}^{s}(X)}:= \begin{cases} 
       \left( \displaystyle\sum_{j = 0}^{\infty} 2^{jsq}\| \varphi_j(D)f \|_{L^p(X)}^q\right)^{1/q}, & \text{if} \; q < \infty \\
     \displaystyle \sup_{j \ge 0} 2^{js}\| \varphi_j(D)f \|_{L^p(X)}, & \text{if} \; q = \infty .\\
   \end{cases}$$
   
We shall use the symbols $ \simeq $ and $\lsim$ to denote estimates that hold up to a universal constant.\\

We will need the following properties of Besov spaces.
\begin{lemma}[See \cite{BCD} and \cite{T83}]\label{Bprops}
Let $s>0$ and $1 \le p, q \le \infty$.

We have the following Moser type inequality:
\begin{equation}\label{eqn:moser}   \| fg\|_{B_{p,q}^{s}(\mathbb{R}^d)} \lsim \|f \|_{L^{p_1}(\mathbb{R}^d)} \; \| g\|_{B_{p_2,q}^{s}(\mathbb{R}^d)} +  \| g\|_{L^{p_3}(\mathbb{R}^d)} \; \|f \|_{B_{p_4,q}^{s}(\mathbb{R}^d)},\end{equation}
where $1 \le p_1, p_2, p_3, p_4 \le \infty$ and $\frac{1}{p} = \frac{1}{p_1} + \frac{1}{p_2} = \frac{1}{p_3} + \frac{1}{p_4}$.

The space $B_{p,q}^{s}(\mathbb{R}^d)\cap L^\infty(\mathbb{R}^d)$ is a Banach algebra. Moreover, $B_{p,q}^{s}(\mathbb{R}^d)$ is a Banach algebra if and only if $B_{p,q}^{s}(\mathbb{R}^d)\hookrightarrow L^\infty(\mathbb{R}^d)$ if and only if $s > d/p$. In particular,
\begin{equation}\label{eqn:algp} \| fg\|_{B_{p,q}^{s}(\mathbb{R}^d)} \lsim
\|f \|_{B_{p,q}^{s}(\mathbb{R}^d)} \; \| g\|_{B_{p,q}^{s}(\mathbb{R}^d)}. \end{equation}

Let $s = \theta s_1 + (1-\theta)s_2$, where $\theta \in [0,1]$. Then we have the following interpolation inequality
\begin{equation}\label{eqn:interp} \| f\|_{B_{p,q}^{s}(\mathbb{R}^d)} \lsim \|f \|_{B_{p,q}^{s_1}\mathbb{R}^d)}^\theta \; \| f\|_{B_{p,q}^{s_2}(\mathbb{R}^d)}^{1-\theta} .
\end{equation}
\end{lemma}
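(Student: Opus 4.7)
My plan is to prove the three estimates of Lemma \ref{Bprops} from easiest to hardest, all by direct manipulations on the Littlewood--Paley decomposition.

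For the interpolation inequality \eqref{eqn:interp}, I would work straight from the definition: split the weight as $2^{js} = 2^{j\theta s_1}\cdot 2^{j(1-\theta)s_2}$ inside the defining $\ell^q$-sum, then apply H\"older's inequality in the $j$-summation with conjugate exponents $1/\theta$ and $1/(1-\theta)$ to obtain exactly the claimed bound. The $q=\infty$ case reduces to a pointwise inequality in $j$ and is even simpler.

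For the Moser inequality \eqref{eqn:moser}, I would invoke Bony's paraproduct decomposition
\[
fg = T_f g + T_g f + R(f,g), \qquad T_u v := \sum_j S_{j-1}u \cdot \varphi_j(D)v,
\]
where $R(f,g)$ is the diagonal remainder, and estimate each piece in $B_{p,q}^s$ separately. For $T_fg$, the $j$-th summand has Fourier support in an annulus of radius $\sim 2^j$, so $\varphi_k(D)T_fg$ picks up only finitely many indices $j\sim k$; H\"older on each block, combined with the standard uniform bound $\|S_{k-1}f\|_{L^{p_1}} \lsim \|f\|_{L^{p_1}}$, yields $\|\varphi_k(D)T_fg\|_{L^p} \lsim \|f\|_{L^{p_1}}\|\varphi_k(D)g\|_{L^{p_2}}$; multiplying by $2^{ks}$ and taking the $\ell^q$-norm in $k$ gives $\|f\|_{L^{p_1}}\|g\|_{B_{p_2,q}^s}$. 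The symmetric argument bounds $T_gf$ by $\|g\|_{L^{p_3}}\|f\|_{B_{p_4,q}^s}$. The remainder $R(f,g)$ is the delicate term: its $j$-th piece is spectrally localized only in a \emph{ball} of radius $\sim 2^{j+2}$, so its contribution to the $k$-th Littlewood--Paley block must be summed over all $j\ge k-N$, and this is exactly where the assumption $s>0$ enters to force the geometric series in $2^{(k-j)s}$ to converge, with the $\ell^q$-summation closed via a discrete Young-type inequality.

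The sufficiency of $s>d/p$ for the algebra inequality \eqref{eqn:algp} then follows from \eqref{eqn:moser} by the choice $p_1=p_3=\infty$, $p_2=p_4=p$, together with the embedding $B_{p,q}^s \hookrightarrow L^\infty$ valid in this range. The remaining implications in the algebra characterization are classical, the nontrivial direction being obtained by testing the algebra inequality against suitably dilated bumps to force $L^\infty$-control from the $B_{p,q}^s$-norm. I expect the remainder estimate in \eqref{eqn:moser} to be the main technical obstacle, since its ball-type spectral localization requires a careful redistribution of the right-hand-side norms between the two terms and relies crucially on $s>0$ for the summation to close.
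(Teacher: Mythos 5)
The paper does not prove this lemma at all: it is quoted as a known result with references to \cite{BCD} and \cite{T83}, so there is no in-paper argument to compare against. Your sketch is the standard proof from those references --- H\"older in $j$ for the interpolation inequality, Bony's decomposition with the paraproduct/remainder split (and $s>0$ entering exactly where you say, to sum the ball-localized remainder over $j\ge k-N$) for the Moser estimate, and the choice $p_1=p_3=\infty$, $p_2=p_4=p$ plus the embedding $B_{p,q}^{s}\hookrightarrow L^\infty$ for the algebra property --- and it is correct as outlined.
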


It is known that the pressure term can be removed from the Euler equations, in fact applying the divergence operator to the first equation in \eqref{eqn:EE} and then solving for $p$ gives 
\begin{equation}\label{eqn:hodge1} \nabla p = - \nabla\Delta^{-1}\text{div}\left( (u \cdot \nabla)u \right) = -(1-P)\left( (u \cdot \nabla)u \right),
\end{equation}
where the operator $P = 1 - \nabla\Delta^{-1}\text{div}$ is the $L^2$-orthogonal projection onto the divergence free part in the Hodge's decomposition and satisfies the estimate 
\begin{equation}\label{eqn:hodge2} \| Pf\|_{B_{p,q}^{s}} \le \| f\|_{B_{p,q}^{s}},
\end{equation}
see \cite{T83}. We use \eqref{eqn:hodge1} and the notation $\nabla_{a}b:= (a\cdot \nabla)b$ to deduce the following non-local form of the Cauchy problem \eqref{eqn:EE}
\begin{equation}\label{eqn:NLEE}
        \begin{aligned}
        	\partial_t u  + \nabla_uu - \nabla\Delta^{-1}\text{div}\left( \nabla_uu \right) & = 0,\\
		\text{div}\; u & = 0, \\
		u(0,x) & = u_0(x).
        \end{aligned}
\end{equation}

Local well-posedness of the Euler equations has been established by many authors. We summarize the result, as found in \cite{BCD}, in the form which is convenient for our purposes: if $1\le p,q \le \infty$, $s\in \mathbb{R}$ is such that $s > 1 + d/p$ and $u_0 \in B_{p,q}^{s}$ satisfies $\text{div}\; u_0  = 0 $, then there is a  $T = T(u_0)$, such that \eqref{eqn:EE} has a unique solution $u$ in $C\left([0,T];B_{p,q}^{s}\right)$. Furthermore, we have the estimate
\begin{equation}\label{eqn:energy} \|u(t)\|_{B_{p,q}^{s}} \lsim \|u_0\|_{B_{p,q}^{s}}, \;\;\;\; \text{for} \;\; 0 \le t\le T. \end{equation}

Our general strategy for proving the Theorems \ref{main1}  and \ref{main3} will be the following: consider two sequences of solutions to \eqref{eqn:NLEE}, $u^{+1, n}$ and $u^{-1, n}$, satisfying the following conditions:

\begin{enumerate}[(i)]
\item $u^{+1, n}$ and $u^{-1, n}$ are confined to a ball in $B_{p,q}^{s}(X)$, i.e., for any $t \in [0,1]$ we have
$$ \|u^{\pm1,n}(t,\cdot)\|_{B_{p,q}^{s}(X)} \lsim 1.$$
\item At time $t = 0$, the two sequences are converging as $n \to \infty$, that is
$$ \|u^{+1,n}(0,\cdot) - u^{-1,n}(0,\cdot)\|_{B_{p,q}^{s}(X)} \to 0.$$
\item For any $t \in (0,1]$ they remain far apart, more precisely 
$$ \liminf_{n \to \infty} \|u^{+1,n}(t,\cdot) - u^{-1,n}(t,\cdot)\|_{B_{p,q}^{s}(X)} \gsim \sin t.$$
\end{enumerate}

Dimensions $d=2,3$ represent the physical relevant cases in the study of fluid motion. In what follows we fix the spatial dimension to $d=2$ for simplicity of presentation. 
\section{Non-uniform dependence in Besov spaces - the periodic case}\label{pbesov}
In this section we prove that the solution map is not uniformly continuous in the periodic case. \\

To proceed we will need the following estimates for high frequencies in Besov spaces.
\begin{lemma}\label{tlemp}
Let $s\in \mathbb{R}$ and $1\le p,q \le \infty$. For any constant $a \in \mathbb{R}$ we have the estimates
$$ \|n^{-s}\cos (n \cdot - a) \|_{B_{p,q}^{s}(\mathbb{T})} \simeq 1, \qquad \|n^{-s}\sin (n \cdot - a) \|_{B_{p,q}^{s}(\mathbb{T})} \simeq 1, \quad n \gg 1.$$
\end{lemma}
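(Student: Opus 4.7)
The plan is to exploit Fourier localization: $\cos(nx-a)$ and $\sin(nx-a)$ have Fourier series supported at only the two frequencies $\pm n$, so only a bounded number of Littlewood--Paley pieces contribute to the Besov norm. More precisely, since $\varphi_1$ is supported in $\{1\le|\xi|\le 4\}$, the function $\varphi_j=\varphi_1(\cdot/2^{j-1})$ is supported in the annulus $\{2^{j-1}\le|\xi|\le 2^{j+1}\}$; hence for $n\gg 1$ the set $J_n:=\{j:\varphi_j(n)\ne 0\}$ consists of at most three consecutive integers, each satisfying $2^j\simeq n$. The telescoping identity $\sum_{j=0}^{J}\varphi_j(\xi)=\varphi_0(\xi/2^{J})$ yields in the limit the partition of unity $\sum_{j\ge 0}\varphi_j(n)=1$.

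Next I would compute each Littlewood--Paley block explicitly. Since $\varphi_0$ is radial, each $\varphi_j$ is even, so starting from $\cos(nx-a)=\tfrac{1}{2}(e^{-ia}e^{inx}+e^{ia}e^{-inx})$ one obtains
$$\varphi_j(D)[\cos(n\cdot-a)](x)=\varphi_j(n)\cos(nx-a).$$
A change of variables $u=nx-a$ together with the $2\pi$-periodicity of $|\cos|^p$ shows that $\|\cos(n\cdot-a)\|_{L^p(\mathbb{T})}=C_p$ is a positive constant depending only on $p$ (uniform in $n$ and $a$), with $C_\infty=1$. The same identities hold with $\cos$ replaced by $\sin$.

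Plugging these facts into the definition of the Besov norm and using $2^{js}\simeq n^s$ for $j\in J_n$ gives, for $q<\infty$,
$$\|n^{-s}\cos(n\cdot-a)\|_{B^s_{p,q}(\mathbb{T})}^q \;=\; C_p^q\sum_{j\in J_n} 2^{jsq}n^{-sq}\varphi_j(n)^q \;\simeq\; C_p^q\sum_{j\in J_n}\varphi_j(n)^q,$$
and the last sum lies in $[3^{-q},1]$: the upper bound follows from $\varphi_j(n)^q\le\varphi_j(n)$ and $\sum_j\varphi_j(n)=1$, while the lower bound follows from the same identity together with pigeonhole, which forces at least one $\varphi_j(n)\ge 1/3$. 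Taking $q$-th roots gives $\|n^{-s}\cos(n\cdot-a)\|_{B^s_{p,q}(\mathbb{T})}\simeq 1$; the case $q=\infty$ is handled identically by replacing the $\ell^q$ sum with a supremum. I do not anticipate any real obstacle: the Fourier localization trivializes the Littlewood--Paley decomposition, and the only subtle point is the uniform lower bound, which is immediate from the partition-of-unity identity together with pigeonhole.
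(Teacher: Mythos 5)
Your proposal is correct and follows essentially the same route as the paper: both exploit that $\cos(n\cdot-a)$ has Fourier support $\{\pm n\}$, use radiality (justified by norm equivalence under change of dyadic partition) to reduce each block to $\varphi_j(n)\cos(nx-a)$, factor out the $n$- and $a$-independent $L^p$ norm of the cosine, and obtain the lower bound from the partition-of-unity identity plus pigeonhole ($\varphi_j(n)\ge 1/3$ for some $j$). The only cosmetic difference is that you state the exact constancy of $\|\cos(n\cdot-a)\|_{L^p(\mathbb{T})}$ via change of variables, which is slightly cleaner than the paper's $\|\cos(n\cdot)\|_{L^p}+\|\sin(n\cdot)\|_{L^p}\simeq 1$.
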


\begin{proof} We will proceed by cases. 

Case I: $1\le q < \infty$. Using the formula for the inverse Fourier transform we get
\begin{align*}
\|\cos(n\cdot-a) \|_{B_{p,q}^{s}(\mathbb{T})}^q
& = \sum_{j = 0}^{\infty}2^{jsq} \Big\|\sum_{k\in \mathbb{Z}^2} \varphi_j(k) \pi(e^{-ia}\delta_{k}^{n} + e^{ia}\delta_{k}^{-n}) e^{ikx} \Big\|_{L^{p}(\mathbb{T})}^{q}.
\end{align*} 

It is clear that there are only two non-zero terms in the inner sum and thus the above equals
\begin{align*}  &  \pi^q  \sum_{j = 0}^{\infty}2^{jsq} \Big\|\varphi_j(n) e^{-ia}e^{inx} + \varphi_j(-n)e^{ia}e^{i(-n)x} \Big\|_{L^{p}(\mathbb{T})}^{q}.
\end{align*}

Since any dyadic partition induces an equivalent norm we can further assume that the $\varphi_j$'s are radial. We then obtain the equivalent expression
\begin{align*} 
 (2\pi)^q  \sum_{j = 0}^{\infty}2^{jsq} \varphi_j(n)^q \left \| \frac{ e^{-ia}e^{inx} + e^{ia}e^{i(-n)x} }{2} \right\|_{L^{p}(\mathbb{T})}^{q}  & \simeq  \| \cos(n\cdot - a) \|_{L^{p}(\mathbb{T})}^q \sum_{j = 0}^{\infty}2^{jsq}\varphi_j(n)^{q} 
 \\ & =: (*).
\end{align*}

For any $n$, we have $\| \cos(n \cdot ) \|_{L^{p}(\mathbb{T})} + \| \sin (n \cdot ) \|_{L^{p}(\mathbb{T})} \simeq 1$. Since $\varphi_j$ defines a dyadic partition of unity, the terms $\varphi_j(n)$ vanish except for $l \in \mathbb{N}_0$ for which $2^{l-1} \le n \le 2^{l+1}$ and
\begin{align*} 
(*)& \simeq 2^{(l-1)sq}\varphi_{l-1}(n)^{q} + 2^{lsq}\varphi_l(n)^{q}  + 2^{(l+1)sq}\varphi_{l+1}(n)^{q}  \\
& \simeq  n^{sq}.
\end{align*}

Case II: $q = \infty$. In this case we have
\begin{align*} \|\cos(n\cdot-a) \|_{B_{p,\infty}^{s}(\mathbb{T})}  & = \sup_{j \ge 0}2^{js} \Big\|\sum_{k\in \mathbb{Z}^2} \varphi_j(k)\left( \cos (nx-a) \right)\hat\;(k) e^{ikx} \Big\|_{L^{p}(\mathbb{T})} .
\end{align*}

Similar computations gives
\begin{align*} 2^{js} \Big\|\sum_{k\in \mathbb{Z}^2} \varphi_j(k)\left( \cos (nx-a) \right)\hat\;(k) e^{ikx} \Big\|_{L^{p}(\mathbb{T})} & \simeq 2^{js}\| \cos(n\cdot - a) \|_{L^{p}(\mathbb{T})} \varphi_j(n) \\ & \simeq n^s \varphi_j(n).
\end{align*}

By construction all defining functions are uniformly bounded. Again, for each $n \gg 1$ $\varphi_j(n)$ vanish except for $l$ satisfying $2 ^{l-1} \le n \le 2^{l+1}$, furthermore $\varphi_{l-1}(n) + \varphi_{l}(n) + \varphi_{l+1}(n) = 1$. The non-negativity of the $\varphi_l$'s implies that at least one of these three terms must be greater than or equal to $1/3$.  Thus, taking supremum over $j \in \mathbb{N}_0$ gives 
$$ \| \cos(n\cdot-a) \|_{B_{p,\infty}^{s}(\mathbb{T})} \simeq n^{s}.$$
The proof for the sine function is similar.
\end{proof}

\subsection{ \bf Proof of Theorem \ref{main1}}\label{proof1}
We consider two sequences of solutions 
\begin{equation}\label{eqn:ftseq}
u^{\pm 1,n}(t,x_1,x_2) = \left( \frac{\pm1}{n} + \frac{1}{n^s} \cos  \left(n x_2 \mp t \right) \;,\; \frac{\pm1}{n} + \frac{1}{n^s} \cos  \left(n x_1 \mp t \right) \right)
\end{equation}
where $s\in \mathbb{R}$, $(x_1,x_2) \in \mathbb{T}^2$, $n \in \mathbb{N}$, $t\in [0,1]$ and verify that conditions (i)-(iii) are satisfied. This family of solutions to the incompressible Euler equations \eqref{eqn:NLEE} was introduced in \cite{HM} and used there to show that the solution map is not uniformly continuous in Sobolev $H^s$ spaces.

Using Lemma \ref{tlemp} we see that for any $t \in [0,1]$
\begin{align*}
\|u^{\pm 1,n}(t,\cdot)\|_{B_{p,q}^{s}(\mathbb{T}^2)} 
& \lsim \left \| n^{-1}\right\|_{B_{p,q}^{s}(\mathbb{T}^2)} + \left \| n^{-s} \cos  \left(n \cdot \mp  t \right)  \right\|_{B_{p,q}^{s}(\mathbb{T}^2)}  \\ & \lsim 1,
\end{align*}
thus, the family of solutions $u^{\pm 1,n}$ is confined to ball in $B_{p,q}^{s}$, this is (i). 

The angle sum formula gives
\begin{equation}\label{eqn:difftsol}u^{+1,n}(t,x) - u^{-1,n}(t,x)= \left( \frac{2}{n} + \frac{2}{n^{s}}\sin t \sin (n x_2)\;,\; \frac{2}{n} + \frac{2}{n^{s}}\sin t \sin (n x_1) \right),
\end{equation}
from which (ii) follows, in fact
\begin{align*} \| u^{+1,n}(0,\cdot) - u^{-1,n}(0,\cdot)\|_{B_{p,q}^{s}(\mathbb{T}^2)} & \simeq \left \| n^{-1}\right \|_{B_{p,q}^{s} (\mathbb{T}^2)} \simeq n^{-1} \to 0, \quad\text{as}\quad n\to\infty.
\end{align*}

Finally, using \eqref{eqn:difftsol} and the triangle inequality we get
\begin{align*}
\| u^{+1,n}(t,\cdot) - u^{-1,n}(t,\cdot)\|_{B_{p,q}^{s}(\mathbb{T}^2)} & \simeq   \left \| n^{-1} + n^{-s} \sin t \sin (n \cdot)  \right\|_{B_{p,q}^{s}(\mathbb{T}^2)} \\
& \gsim  |\sin t| \; \|n^{-s} \sin (n \cdot)  \|_{B_{p,q}^{s} (\mathbb{T}^2)} - \left \| n^{-1}\right\|_{B_{p,q}^{s}(\mathbb{T}^2)} \\
&  \simeq  |\sin t| - n^{-1}.
\end{align*}

In the last line Lemma \ref{tlemp} was applied to the sine function. Taking the limit as $n\to \infty$ we get
$$ \liminf_{n \to \infty} \|u^{+1,n}(t,\cdot) - u^{-1,n}(t,\cdot)\|_{B_{p,q}^{s}(\mathbb{T}^2)} \gsim \sin t  \;,\;\;\; \forall t \in (0,1],$$

which completes the proof in the case $d= 2$.\\

For the case $d=3$, it suffices to consider the following vector-valued solution to the Euler equations
$$ u^{\pm 1,n}(t,x_1,x_2,x_3) = \left( \pm n^{-1} + n^{-s} \cos  \left(n x_2 \mp t \right) \;,\; \pm n^{-1} + n^{-s} \cos  \left(n x_1 \mp t \right),0 \right),$$
for which Lemma \ref{tlemp} and the non-uniform dependence argument apply with appropriate modification.
\section{Non-uniform dependence in H\"older spaces - the periodic case}\label{pholder}

In this section we prove Corollary \ref{main2}.  Recall that Misio\l{}ek and Yoneda \cite{MY} proved local well-posedness in the sense of Hadamard in the space $c^{1,\sigma}$ of functions in $C^{1,\sigma}$ satisfying the vanishing condition
\begin{equation}\label{eqn:vcond} \lim_{h \to 0} \sup_{0 < |x-y| < h}\frac{|\partial^\alpha u(x) - \partial^\alpha u(y)|}{|x-y|^\sigma} = 0\;\;, \forall |\alpha| = 1.
\end{equation}

Besov spaces arise naturally when studying the Fourier analytic characterization of H\"older continuity through the relation $ B_{\infty,\infty}^{s} =  C^{[s],\{s\}}$, where $s = [s] +\{s\}$ is the decomposition of $s$ into its integer and fractional parts. Because the case $p=q=\infty$ is already included in the scope of Theorem \ref{main1} we see that the result holds in $C^{1,\sigma}(\mathbb{T}^d)$ where it is expected for as shown in \cite{MY} the solution map is not even continuous. Thus our result states that in $c^{1,\sigma}(\mathbb{T}^d)$, continuity is optimal. Because $c^{1,\sigma}$ inherits its norms from $C^{1,\sigma}$, we see that applying the previous result to $p=q=\infty$ will give the result for $c^{1,\sigma}$ once we prove that the family of solutions in \eqref{eqn:ftseq} satisfies \eqref{eqn:vcond}. 

\subsection{ \bf Proof of Corollary \ref{main2} }\label{proof2}
Upon relying on a symmetric argument we let $u = u^{+1,n}$ and $\alpha  = e_1 = (1,0)$ be a standard basis element of $\mathbb{R}^2$. The computations for $u = u^{-1,n}$ or $\alpha = e_2 = (0,1)$ are similar. We note that
\begin{align*}
 \partial^\alpha u(x) - \partial^\alpha u(y) & = \partial_{x_1} \left( \frac{1}{n} + \frac{1}{n^{1 + \sigma }} \cos  \left(n x_2 - t \right) \;,\; \frac{1}{n} + \frac{1}{n^{1 + \sigma }} \cos  \left(n x_1 - t \right) \right)\\
& - \partial_{y_1} \left( \frac{1}{n} + \frac{1}{n^{1 + \sigma }} \cos  \left(n y_2 - t \right) \;,\; \frac{1}{n} + \frac{1}{n^{1 + \sigma }} \cos  \left(n y_1 - t \right) \right)\\
& = \left( 0 \;,\; -\frac{1}{n^{\sigma}} \sin  \left(n x_1 - t \right) + \frac{1}{n^{\sigma}} \sin  \left(n y_1 - t \right) \right).
\end{align*}

Applying the mean value theorem we see that 
\begin{align*}
|\partial^\alpha u(x) - \partial^\alpha u(y)| & = \frac{1}{n^{\sigma}} \left| \sin  \left(n x_1 - t \right) - \sin  \left(n y_2 - t \right) \right| \\
& \le  \frac{n}{n^{\sigma}} \cdot 1 \cdot |x_1 - y_1| \\
& \le n^{1-\sigma} |x - y|,
\end{align*}
thus, for $|x-y| \not= 0$  we get
$$\frac{|\partial^\alpha u(x) - \partial^\alpha u(y)|}{|x-y|^\sigma} \le n^{1-\sigma} |x - y|^{1-\sigma}.  $$

Because $\sigma \in (0,1)$, taking sup over all $0 < |x-y| < h$ gives
$$ \sup_{0 < |x-y| < h}\frac{|\partial^\alpha u(x) - \partial^\alpha u(y)|}{|x-y|^\sigma} \le n^{1-\sigma} h^{1-\sigma},  $$
so that taking the limit we obtian
$$  \lim_{h \to 0} \sup_{0 < |x-y| < h}\frac{|\partial^\alpha u(x) - \partial^\alpha u(y)|}{|x-y|^\sigma} \le n^{1-\sigma} \lim_{h \to 0} h^{1-\sigma}= 0. $$

This completes the proof of Corollary \ref{main2}.

\section{Non-uniform dependence in Besov spaces - the non-periodic case}\label{npbesov}

Just as in Section \ref{pbesov}, our general strategy here will be to find two sequences of solutions to \eqref{eqn:NLEE} for which the conditions (i)-(iii) are satisfied. We make use of the approximate solutions technique as in \cite{HM}. For more details about this technique refer to Tzvetkov \cite{T}. We first select two sequences of bounded approximate solutions, which are arbitrarily close at time zero but separated at later times. We then show that the difference between the approximate solutions and the exact solutions to equation \eqref{eqn:NLEE} that they induce is negligible in the Besov norm. Finally, we show that the exact solutions remain separated from each other at later times using triangle inequality and the fact that the approximate solutions are separated at later times but still converge to the exact solutions. 

To proceed we will need the following estimates.
\begin{lemma}\label{tlemnp}
 Let $\delta > 0$, $\sigma > 0$ and $1 \le q \le \infty$. For any Schwartz function $f \in \mathcal{S}(\mathbb{R})$ we have  
\begin{equation}\label{eqn:npest1}
\lambda^{\delta/2} \|f \|_{L^2(\mathbb{R})} \lsim 
\left \|f \left(\frac{\cdot}{\lambda^\delta}\right) \right\|_{B_{2,q}^{\sigma}(\mathbb{R})} \lsim \lambda^{\delta/2}\|f \|_{B_{2,q}^{\sigma}(\mathbb{R})}, \quad \lambda \gg 1.\end{equation}

Furthermore, for $2 \le q \le \infty$ and any constant $a\in \mathbb{R}$ we have the estimate
\begin{equation}\label{eqn:npest2} \left \|f \left(\frac{\cdot}{\lambda^\delta}\right) \cos(\lambda \cdot - a) \right \|_{B_{2,q}^{\sigma}(\mathbb{R})} \simeq \lambda^{\sigma +\delta/2}\|f \|_{L^2(\mathbb{R})}, \quad \lambda \gg 1,\end{equation}
which also holds when $\cos(\lambda \cdot - a)$ is replaced by $\sin(\lambda \cdot - a)$.
\end{lemma}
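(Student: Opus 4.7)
My plan for both estimates is to work on the Fourier side, exploiting the fact that dilation in physical space inverts the scale in frequency. Writing $g(x) = f(x/\lambda^\delta)$ so that $\hat g(\xi) = \lambda^\delta \hat f(\lambda^\delta \xi)$, Plancherel's theorem yields
$$\|\varphi_j(D) g\|_{L^2(\mathbb{R})}^2 \;=\; \lambda^\delta \int_{\mathbb{R}} \bigl|\varphi_j(\eta/\lambda^\delta)\bigr|^2\,|\hat f(\eta)|^2\,d\eta.$$
Since $\lambda^\delta \gg 1$ and $f$ is Schwartz, $\hat f$ is essentially supported on a fixed ball while $\varphi_0(\eta/\lambda^\delta) \equiv 1$ on a ball of radius $\lambda^\delta$. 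This immediately delivers the lower bound in \eqref{eqn:npest1}, since $\|g\|_{B_{2,q}^{\sigma}} \ge \|\varphi_0(D) g\|_{L^2} \simeq \lambda^{\delta/2}\|f\|_{L^2}$. For the upper bound I choose $J$ with $2^J \simeq \lambda^\delta$; for $j\ge 1$ the integrand localizes to $|\eta|\sim 2^j\lambda^\delta$, and a change of variables identifies it with a constant multiple of $\|\varphi_{j+J}(D) f\|_{L^2}^2$. Reindexing the $\ell^q$ sum over $j$ produces a tail of the Besov norm of $f$ with prefactor $\lambda^{\delta(1/2-\sigma)} \le \lambda^{\delta/2}$, while the $j=0$ term contributes at most $\lambda^{\delta/2}\|f\|_{L^2}$; together these are dominated by $\lambda^{\delta/2}\|f\|_{B_{2,q}^{\sigma}}$.

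For \eqref{eqn:npest2}, set $h(x) = f(x/\lambda^\delta)\cos(\lambda x - a)$. A direct computation gives
$$\hat h(\xi) \;=\; \tfrac{\lambda^\delta}{2}\bigl(e^{-ia}\,\hat f(\lambda^\delta(\xi-\lambda)) \;+\; e^{ia}\,\hat f(\lambda^\delta(\xi+\lambda))\bigr),$$
so $\hat h$ consists of two narrow bumps of width $\sim\lambda^{-\delta}$ centered at $\xi = \pm\lambda$. Picking $J$ with $2^J \simeq \lambda$, for $\lambda$ large these bumps are disjoint from each other and from the origin, and each lies strictly inside the annulus where $\varphi_{J-1}+\varphi_J+\varphi_{J+1}\equiv 1$. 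On a single bump, $\varphi_j(\xi)$ is essentially constant and equal to $\varphi_j(\pm\lambda)$ up to a negligible error. Using Plancherel, disjointness to eliminate cross-terms, and the change of variables $\eta = \lambda^\delta(\xi\mp\lambda)$, I obtain
$$\|\varphi_j(D) h\|_{L^2(\mathbb{R})}^2 \;\simeq\; \lambda^\delta\,|\varphi_j(\lambda)|^2\,\|f\|_{L^2(\mathbb{R})}^2.$$
Only the three values $j \in \{J-1,J,J+1\}$ contribute to the $\ell^q(2^{j\sigma})$ sum, and exactly as in the proof of Lemma~\ref{tlemp} at least one of $\varphi_{J-1}(\lambda),\varphi_J(\lambda),\varphi_{J+1}(\lambda)$ is $\gtrsim 1$. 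This yields $\|h\|_{B_{2,q}^{\sigma}} \simeq \lambda^{\sigma+\delta/2}\|f\|_{L^2}$; the $q=\infty$ case is identical with a supremum in place of the sum, and the sine case follows by the same argument.

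The step I expect to be most delicate is making the phrases ``essentially supported'' and ``essentially constant'' quantitative. Concretely, I will choose a large cutoff $M$, bound the contribution of $|\eta|>M$ via Schwartz seminorms of $f$, and control the replacement error $|\varphi_j(\eta/\lambda^\delta \pm \lambda) - \varphi_j(\pm\lambda)|$ by $O(M/\lambda^\delta)\,\|\varphi_j'\|_\infty$. These estimates are routine but must be carried out uniformly in $\lambda$ and $j$ to produce the clean $\simeq$ asymptotic in both \eqref{eqn:npest1} and \eqref{eqn:npest2}.
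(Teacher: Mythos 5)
Your argument is essentially correct, but it follows a genuinely different route from the paper. You work throughout with the Littlewood--Paley definition of the Besov norm, computing $\|\varphi_j(D)(\cdot)\|_{L^2}$ on the Fourier side via Plancherel and tracking how dilation by $\lambda^{-\delta}$ and modulation by $e^{\pm i\lambda x}$ move the frequency support. The paper instead proves \eqref{eqn:npest1} and the lower bound in \eqref{eqn:npest2} using the equivalent characterization of $B_{2,q}^{\sigma}$ by iterated finite differences $\Delta_h^m$ (changing variables in $h$ and in the integration parameter $t$), and it obtains the upper bound in \eqref{eqn:npest2} not by direct computation but by quoting the corresponding $H^{\sigma}$ estimate from \cite{HM} together with the embedding $B_{2,2}^{\sigma}\hookrightarrow B_{2,q}^{\sigma}$ --- which is precisely where the restriction $q\ge 2$ in \eqref{eqn:npest2} comes from. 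Your approach buys self-containedness (no appeal to the finite-difference norm or to \cite{HM}) and, if carried out, actually proves \eqref{eqn:npest2} for all $1\le q\le\infty$, since the $\ell^q$ sum degenerates to three terms near $2^J\simeq\lambda$ plus rapidly decaying tails for every $q$; the paper's embedding trick cannot see below $q=2$. The price is exactly the bookkeeping you flag at the end: the bumps $\hat f(\lambda^{\delta}(\xi\mp\lambda))$ are not compactly supported, so you must bound the far tails by Schwartz seminorms uniformly in $j$ before summing in $\ell^q$, and the reindexing $\varphi_j(\cdot/\lambda^{\delta})\leftrightarrow\varphi_{j+J}$ is only exact up to a bounded number of overlapping blocks (harmless, but worth saying). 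With those details written out, your proof is complete and slightly stronger than the one in the paper.
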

\begin{proof} 

We use an equivalent formulation of Besov spaces, see \cite{T13}. That is, $f\in B_{p,q}^{\sigma}(\mathbb{R}^d)$ for $0 < \sigma < m$, $m \in \mathbb{N}$,  and $1 \le p \le \infty$ if and only if it has finite norm
$$  \|f\|_{L^{p}(\mathbb{R}^d)} + \begin{cases} 
        \left(\displaystyle\int_{0}^{1}t^{-\sigma q}\sup_{|h|\le t}\|\Delta_{h}^{m}f\|_{L^{p}(\mathbb{R}^d)}^{q} \; \frac{dt}{t}\right)^{1/q}, & \text{if} \; 1 \le q < \infty \\
    \displaystyle  \sup_{h \not =0}|h|^{-\sigma}\|\Delta_{h}^{m} f\|_{L^p(\mathbb{R}^d)},  & \text{if} \; q = \infty .\\
   \end{cases}$$

The iterated differences $\Delta_{h}^{m}$ are defined as follows. 
$$\Delta_{h}^{1}f(x):= f(x+h)-f(x), \quad \Delta_{h}^{m+1}f(x): = \Delta_{h}^{1}(\Delta_{h}^{m}f)(x) \quad m \ge 1.$$ 

We proceed by steps. \\

Step I. We first prove the estimates in \eqref{eqn:npest1}.\\

The first estimate follows from a change of variables. 

$$ \left\|f\left( \frac{\cdot}{\lambda^{\delta}}\right) \right\|_{B_{2,q}^{\sigma}(\mathbb{R})} \gsim \left\|f\left( \frac{\cdot}{\lambda^{\delta}}\right) \right\|_{L^2(\mathbb{R})} = \lambda^{\delta/2} \|f \|_{L^2(\mathbb{R})}.  $$

The definition of the iterated differences and induction gives 
\begin{equation}\label{eqn:itdiff} \Delta_{h}^{m}\left[f\left(\frac{\cdot}{\lambda^\delta}\right)\right](x) = \Delta_{h/\lambda^\delta}^{m}\big[f\big] \left(\frac{x}{\lambda^\delta}\right),  \quad  \Delta_{h}^{m}f(x) = \sum_{k=0}^{m} (-1)^{m-k} {m \choose k} f(x +hk).\end{equation}

For the second estimate we make changes of variables and use \eqref{eqn:itdiff} to get
\begin{align*} \left \| f\left( \frac{\cdot}{\lambda^{\delta}}\right) \right \|_{B_{2,q}^{\sigma}(\mathbb{R})} & \simeq \left \|f\left( \frac{\cdot}{\lambda^{\delta}}\right) \right \|_{L^2(\mathbb{R})} +  \left(\int_{0}^{1}t^{-\sigma q}\sup_{|h|\le t}\left \|\Delta_{h}^{m} \left[f\left( \frac{\cdot}{\lambda^{\delta}}\right) \right](\cdot)  \right \|_{L^{2}(\mathbb{R})}^{q} \; \frac{dt}{t}\right)^{1/q} \\ 
& = \lambda^{\delta/2} \|f \|_{L^2(\mathbb{R})} + \lambda^{-\sigma\delta}\left(\int_{0}^{1}(t/\lambda^\delta)^{- \sigma q}\sup_{|h/\lambda^\delta| \le t/\lambda^\delta}\left \| \Delta_{h/\lambda^\delta}^{m}f \left(\frac{\cdot}{\lambda^\delta}\right)  \right \|_{L^{2}(\mathbb{R})}^{q} \;1/\lambda^\delta \frac{dt}{t/\lambda^\delta} \right)^{1/q}\\
& = \lambda^{\delta/2} \|f \|_{L^2(\mathbb{R})} + \lambda^{-\sigma \delta} \lambda^{\delta/2}\left(\int_{0}^{1/\lambda^\delta}t^{-\sigma q}\sup_{|h| \le t}\left \| \Delta_{h}^{m}f \right \|_{L^{2}(\mathbb{R})}^{q} \; \frac{dt}{t} \right)^{1/q}\\
& \le \lambda^{\delta/2} \| f \|_{B_{2,q}^{\sigma}(\mathbb{R})}.
\end{align*}

Similarly, 
\begin{align*} \left \| f\left( \frac{\cdot}{\lambda^{\delta}}\right) \right \|_{B_{2,\infty}^{\sigma}(\mathbb{R})} & \simeq \left \|f\left( \frac{\cdot}{\lambda^{\delta}}\right) \right \|_{L^2(\mathbb{R})}  + \sup_{h \not =0}|h|^{-\sigma}\left \|\Delta_{h}^{m}  \bigg[ f\left( \frac{\cdot}{\lambda^{\delta}}\right)\bigg] (\cdot) \right\|_{L^2(\mathbb{R})}\\ 
& = \lambda^{\delta/2} \|f \|_{L^2(\mathbb{R})}  + \sup_{h \not =0}|h|^{-\sigma}\left \|\Delta_{h/\lambda^\delta}^{m}  f\left( \frac{\cdot}{\lambda^{\delta}}\right) \right\|_{L^2(\mathbb{R})} \\
& = \lambda^{\delta/2} \|f \|_{L^2(\mathbb{R})}  + \lambda^{\delta/2} \lambda^{-\sigma\delta}\sup_{h/\lambda^\delta \not =0}|h/\lambda^{\delta}|^{-\sigma}\left \|\Delta_{h/\lambda^\delta}^{m}  f \right\|_{L^2(\mathbb{R})} \\
& = \lambda^{\delta/2} \|f \|_{L^2(\mathbb{R})}  + \lambda^{\delta/2} \lambda^{-\sigma\delta}\sup_{h \not =0}|h|^{-\sigma}\left \|\Delta_{h}^{m}  f \right\|_{L^2(\mathbb{R})}  \\
& \le \lambda^{\delta/2} \| f \|_{B_{2,\infty}^{\sigma}(\mathbb{R})}.
\end{align*}

Step II. We now prove the estimates in \eqref{eqn:npest2}.\\

The upper estimate follows from  \cite{HM}, in fact for $2 < q \le \infty$ we have $H^{\sigma} \simeq B_{2,2}^{\sigma}  \hookrightarrow  B_{2,q}^{\sigma}$ and
\begin{align*}
\left \|f \left(\frac{\cdot}{\lambda^\delta}\right) \cos(\lambda \cdot - a) \right \|_{B_{2,q}^{\sigma}(\mathbb{R})} 
& \lsim \left \|f \left(\frac{\cdot}{\lambda^\delta}\right) \cos(\lambda \cdot - a) \right \|_{H^{\sigma}(\mathbb{R})}  \lsim \lambda^{\sigma + \delta/2}\|f \|_{L^2(\mathbb{R})}.
\end{align*}

For the lower estimate we see that for any $1 \le q \le \infty$ we have $B_{2,q}^{s}  \hookrightarrow  B_{2,\infty}^{s}$ which gives
$$\left \|f \left(\frac{\cdot}{\lambda^\delta}\right) \cos(\lambda \cdot - a) \right \|_{B_{2,q}^{\sigma}(\mathbb{R})}  \ge \left \|f \left(\frac{\cdot}{\lambda^\delta}\right) \cos(\lambda \cdot - a) \right \|_{B_{2,\infty}^{\sigma}(\mathbb{R})} .$$

 Thus, we show the desired lower estimate for the spaces $B_{2,\infty}^{s}$. By dropping the $L^2$-norm and using Plancherel's we get 
\begin{align*}
\left \| f \left(\frac{\cdot}{\lambda^\delta}\right) \cos(\lambda \cdot - a) \right \|_{B_{2,\infty}^{\sigma}(\mathbb{R})} & \gsim
 \sup_{h \not =0}|h|^{-\sigma} \left \|  \mathcal{F}\bigg[\Delta_{h}^{m} \left( f \left(\frac{\cdot}{\lambda^\delta}\right) \cos(\lambda \cdot - a)\right) \bigg] \right \|_{L^{2}(\mathbb{R})}.
\end{align*}

Elementary properties of the Fourier transform and \eqref{eqn:itdiff} give
\begin{align*} \mathcal{F}\bigg[\Delta_{h}^{m} \left( f \left(\frac{\cdot}{\lambda^\delta}\right) \cos(\lambda \cdot - a)\right) \bigg](\xi) = \sum_{k=0}^{m} (-1)^{m-k} {m \choose k} e^{i\xi h k} \mathcal{F}\bigg[ f \left(\frac{\cdot}{\lambda^\delta}\right) \cos(\lambda \cdot - a)\bigg](\xi) .\end{align*}

So the right hand side of the latter inequality is equivalent to 
\begin{align*} & \sup_{h \not =0}|h|^{-\sigma}\left(  \int_{\mathbb{R}} \left| \sum_{k=0}^{m} (-1)^{m-k} {m \choose k} e^{i\xi h k} \frac{\lambda^\delta}{2} \left( \hat f \left(\lambda^\delta(\xi - \lambda)\right)e^{-ia} + \hat f \left(\lambda^\delta(\xi + \lambda)\right)e^{ia}\right)  \right|^2 \; d\xi \right)^{1/2}\\
& = \sup_{h \not =0}|h|^{-\sigma}\left(  \int_{\mathbb{R}} \left| \sum_{k=0}^{m} (-1)^{m-k} {m \choose k} e^{i\xi h k} \right|^2  \frac{\lambda^{2\delta}}{4} \left| e^{-ia} \hat f \left(\lambda^\delta(\xi - \lambda)\right) + e^{ia} \hat f \left(\lambda^\delta(\xi + \lambda)\right)   \right|^2   \; d\xi \right)^{1/2} . \end{align*}

From translation and dilation changes of variables the latter equals 
\begin{align*}  & \sup_{h \not =0}|h|^{-\sigma}\left(  \int_{\mathbb{R}} \left| \sum_{k=0}^{m} (-1)^{m-k} {m \choose k} e^{i(\xi + \lambda) h k} \right|^2  \frac{\lambda^{2\delta}}{4} \left| e^{-ia} \hat f (\lambda^\delta \xi) + e^{ia} \hat f \left(\lambda^\delta(\xi + 2\lambda)\right)   \right|^2 \; d\xi \right)^{1/2} \\
& = \sup_{h \not =0}|h|^{-\sigma}\left(  \int_{\mathbb{R}} \left| \sum_{k=0}^{m} (-1)^{m-k} {m \choose k} e^{i(\lambda^{-\delta}\xi + \lambda) h k} \right|^2  \frac{\lambda^\delta}{4} \left| e^{-ia} \hat f ( \xi) + e^{ia} \hat f (\xi + 2\lambda^{\delta+1})   \right|^2  \; d\xi \right)^{1/2}.
 \end{align*}

A  change of variables with respect to the $h$ variable gives the equivalent expression
\begin{align*}
& \lambda^{\sigma }\sup_{\lambda h \not =  0}|\lambda h|^{-\sigma}\left(  \int_{\mathbb{R}} \left| \sum_{k=0}^{m} (-1)^{m-k} {m \choose k} e^{i(\lambda^{-\delta-1}\xi + 1) (\lambda h) k} \right|^2 \frac{\lambda^\delta}{4}  \left| e^{-ia} \hat f ( \xi) + e^{ia} \hat f (\xi + 2\lambda^{\delta+1})   \right|^2 \;d\xi  \right)^{1/2}\\ 
& = \frac{\lambda^{\sigma + \delta/2}}{2}\sup_{h \not =0}| h|^{-\sigma}\left(  \int_{\mathbb{R}} \left| \sum_{k=0}^{m} (-1)^{m-k} {m \choose k} e^{i(\lambda^{-\delta-1}\xi + 1)  h k} \right|^2 \left| e^{-ia} \hat f ( \xi) + e^{ia} \hat f (\xi + 2\lambda^{\delta+1})   \right|^2 \;d\xi  \right)^{1/2} .
\end{align*}

Choosing the value $h = \pi/3$ and applying the binomial theorem gives the  lower bound
\begin{align*}
\lambda^{-2\sigma -\delta} & \left \| f \left(\frac{\cdot}{\lambda^\delta}\right) \cos(\lambda \cdot - a) \right \|_{B_{2,\infty}^{\sigma}(\mathbb{R})}^2 \\ & \gsim  \int_{\mathbb{R}} 2^m\left(1 - \cos\left((\lambda^{-\delta-1}\xi + 1) \pi/3 \right) \right)^m \left| e^{-ia} \hat f ( \xi) + e^{ia} \hat f (\xi + 2\lambda^{\delta+1})   \right|^2 \;d\xi.
\end{align*}

From the triangle inequality we obtain the further lower bound
\begin{align*}
\gsim  \int_{\mathbb{R}} 2^m\left(1 - \cos\left((\lambda^{-\delta-1}\xi + 1) \pi/3 \right) \right)^m |\hat f(\xi)|^2 \;d\xi  - \int_{\mathbb{R}} |\hat f(\xi +2\lambda^{\delta+1})|^2 \;d\xi .
\end{align*}

Finally, we use Lebesgue's dominated convergence theorem and take the limit as $\lambda \to \infty$ on the latter expression to obtain the following estimate for $\lambda \gg 1$
\begin{align*}  \lambda^{-2\sigma - \delta}  \left \| f \left(\frac{\cdot}{\lambda^\delta}\right) \cos(\lambda \cdot - a) \right \|_{B_{2,\infty}^{\sigma}(\mathbb{R})}^2 & \gsim   \int_{\mathbb{R}}2^m (1-\cos(\pi/3))^m |\hat f(\xi)|^2 \;d\xi   =   \| f \|_{L^2(\mathbb{R})}^2 .
\end{align*}
\end{proof}
Next we construct the approximate solutions.

\subsection{Approximate Solutions}
The approximate solutions are given by
$$ u^{\omega, \lambda}(t,x) = u^{h}(t,x) + u^{l}(t,x),\;\;\; (t,x)\in \mathbb{R}\times \mathbb{R}^2.$$

The high frequency term, $u^h$, is defined as 
\begin{align*} u^{h}(t,x) & = (\partial_2, -\partial_1) \bigg\{ \lambda^{-\delta -s -1}\phi \left(\frac{x_1}{\lambda^\delta}\right)\phi \left(\frac{x_2}{\lambda^\delta}\right)\sin(\lambda x_2 - \omega t)\bigg\},
\end{align*}
where the function $\phi \in C_{c}^{\infty}(\mathbb{R})$ is supported in $[-2,2]$ and $\phi = 1 $ in $(-1,1)$, $\lambda \in \mathbb{N}$ and the parameters $s \in \mathbb{R} $, $\delta > 0 $ are specified later. The low frequency term, $u^l$, is defined as the solution to the non-local Cauchy problem \eqref{eqn:NLEE}  with corresponding initial data given by
\begin{align*}u^l(0,x) &  = (\partial_2, -\partial_1) \bigg\{ -\omega\lambda^{\delta -1}\psi_1 \left(\frac{x_1}{\lambda^\delta}\right)\psi_2 \left(\frac{x_2}{\lambda^\delta}\right) \bigg\},
\end{align*}
where the localizing functions $\psi_1,\psi_2 \in C_{c}^{\infty}(\mathbb{R})$ are chosen so that $\psi_1 ' = \psi_2 =1$ on the support of $\phi$.

\subsection{Error Terms and Their Estimates}
Plugging in $u^{\omega,\lambda}$ into equation \eqref{eqn:NLEE} gives 
\begin{align*}
\partial_t u^{\omega,\lambda} & + \nabla_{u^{\omega,\lambda}}u^{\omega,\lambda} - \nabla\Delta^{-1}\text{div}(\nabla_{u^{\omega,\lambda}}u^{\omega,\lambda})  \\
& = \partial_t u^{h} + \nabla_{u^{l}}u^{h} + \nabla_{u^{h}}u^{l} + \nabla_{u^{h}}u^{h} - 2\nabla\Delta^{-1}\text{div}(\nabla_{u^{l}}u^{h}) - \nabla\Delta^{-1}\text{div}(\nabla_{u^{h}}u^{h}) \\
& =: \sum_{j=1}^{6}E_j.
\end{align*} 

In order to estimate the error terms, $E_1,\hdots,E_6$, we first use Lemma \ref{tlemnp} and estimate \eqref{eqn:energy} to estimate the high and low frequencies as follows.
\begin{align*} 
\| u^{l}(t,\cdot) \|_{B_{2,q}^{\sigma}(\mathbb{R}^2)} & \lsim  \| u^{l}(0,\cdot) \|_{B_{2,q}^{\sigma}(\mathbb{R}^2)} \simeq \lambda^{-1 + \delta}, \;\; \text{if}\; \sigma >2. \\
\| u^{h}(t,\cdot) \|_{L^{\infty}(\mathbb{R}^2)} \lsim \lambda^{-s -\delta} \;\;\; & \text{and} \;\;\; \|u^{h}(t,\cdot) \|_{B_{2,q}^{\sigma}(\mathbb{R}^2)} \lsim \lambda^{-s + \sigma}.
\end{align*}

We start estimating $E_1 + E_2$ explicitly. \\

We will find it convenient to use the condition $\psi_1 ' = \psi_2 =1$ on the support of $\phi$ to write the first component of $E_1 + E_2$ as follows.
\begin{align*} (E_1 + E_2)_1(t,x) & =  \lambda^{-s+1-\delta}\left( u_{2}^{l}(0,x) - u_{2}^{l}(t,x)\right)\phi\left(\frac{x_1}{\lambda^\delta} \right)\phi\left(\frac{x_2}{\lambda^\delta} \right)\sin(\lambda x_2 - \omega t)\\
& - \omega \lambda^{-s-1-2\delta} \phi\left(\frac{x_1}{\lambda^\delta} \right)\phi'\left(\frac{x_2}{\lambda^\delta} \right)\cos(\lambda x_2 - \omega t) \\
& + \lambda^{-s-2\delta}u_{1}^{l}(t,x) \phi' \left(\frac{x_1}{\lambda^\delta} \right)\phi \left(\frac{x_2}{\lambda^\delta} \right)\cos(\lambda x_2 - \omega t)\\
& + \lambda^{-s-1-3\delta}u_{1}^{l}(t,x) \phi' \left(\frac{x_1}{\lambda^\delta} \right)\phi' \left(\frac{x_2}{\lambda^\delta} \right)\sin(\lambda x_2 - \omega t) \\
& + 2\lambda^{-s-2\delta}u_{2}^{l}(t,x) \phi \left(\frac{x_1}{\lambda^\delta} \right)\phi '\left(\frac{x_2}{\lambda^\delta} \right)\cos(\lambda x_2 - \omega t)\\
& + \lambda^{-s-1-3\delta}u_{2}^{l}(t,x) \phi \left(\frac{x_1}{\lambda^\delta} \right)\phi ''\left(\frac{x_2}{\lambda^\delta} \right)\sin(\lambda x_2 - \omega t).
\end{align*}

Which we bound for $\omega = \pm 1$ and $\sigma > 1$ as follows.
\begin{align*}
\left\| (E_1 + E_2)_1 \right\|_{B_{2,q}^{\sigma}(\mathbb{R}^2)} & \le   \lambda^{-s+1-\delta} \| u_{2}^{l}(0,\cdot) - u_{2}^{l}(t,\cdot)\|_{B_{2,q}^{\sigma}(\mathbb{R}^2)} \left\| \phi\left(\frac{\cdot}{\lambda^\delta} \right)\phi\left(\frac{\cdot}{\lambda^\delta} \right)\sin(\lambda \cdot - \omega t)\right\|_{B_{2,q}^{\sigma}(\mathbb{R}^2)}\\ 
& +  \lambda^{-s-1-2\delta} \left\| \phi\left(\frac{\cdot}{\lambda^\delta} \right) \right\|_{B_{2,q}^{\sigma}(\mathbb{R})} \left\| \phi'\left(\frac{\cdot}{\lambda^\delta} \right)\cos(\lambda \cdot - \omega t) \right\|_{B_{2,q}^{\sigma}(\mathbb{R})}\\
& + \lambda^{-s-2\delta}\|u_{1}^{l}(t,\cdot)\|_{B_{2,q}^{\sigma}(\mathbb{R}^2)} \left\|\phi' \left(\frac{\cdot}{\lambda^\delta} \right)\phi \left(\frac{\cdot}{\lambda^\delta} \right)\cos(\lambda \cdot - \omega t) \right\|_{B_{2,q}^{\sigma}(\mathbb{R}^2)} \\
& + \lambda^{-s-1-3\delta}\|u_{1}^{l}(t,\cdot)\|_{B_{2,q}^{\sigma}(\mathbb{R}^2)} \left\|\phi' \left(\frac{\cdot}{\lambda^\delta} \right)\phi' \left(\frac{\cdot}{\lambda^\delta} \right)\sin(\lambda \cdot - \omega t) \right\|_{B_{2,q}^{\sigma}(\mathbb{R}^2)}  \\
& + 2\lambda^{-s-2\delta}\|u_{2}^{l}(t,\cdot)\|_{B_{2,q}^{\sigma}(\mathbb{R}^2)} \left\|\phi \left(\frac{\cdot}{\lambda^\delta} \right)\phi '\left(\frac{\cdot}{\lambda^\delta} \right)\cos(\lambda \cdot - \omega t)\right\|_{B_{2,q}^{\sigma}(\mathbb{R}^2)} \\
& + \lambda^{-s-1-3\delta}\|u_{2}^{l}(t,\cdot)\|_{B_{2,q}^{\sigma}(\mathbb{R}^2)} \left\|\phi \left(\frac{\cdot}{\lambda^\delta} \right)\phi ''\left(\frac{\cdot}{\lambda^\delta} \right)\sin(\lambda \cdot - \omega t)\right\|_{B_{2,q}^{\sigma}(\mathbb{R}^2)}  \\
& \lsim \lambda^{-s+1+\sigma}\int_{0}^{T}\|\partial_t u^{l}(t,\cdot)\|_{B_{2,q}^{\sigma}(\mathbb{R}^2)}\; dt  +  \lambda^{-s-1-\delta+\sigma}  + \lambda^{-s-1+\sigma}  + \lambda^{-s-2-\delta+\sigma}.
\end{align*}

We use Lemma \ref{tlemnp} and estimates \eqref{eqn:hodge2} and \eqref{eqn:energy} to get
\begin{align*}
\int_{0}^{T}\|\partial_t u^{l}\|_{B_{2,q}^{\sigma}(\mathbb{R}^2)}\; dt  & = \int_{0}^{T}\|P(\nabla_{u^{l}} u^{l})\|_{B_{2,q}^{\sigma}(\mathbb{R}^2)}\; dt  \le \int_{0}^{T}\|\nabla_{u^{l}} u^{l}\|_{B_{2,q}^{\sigma}(\mathbb{R}^2)}\; dt \\ 
& \lsim \int_{0}^{T} \| u^{l}\|_{B_{2,q}^{\sigma}(\mathbb{R}^2)} \|  u^{l}\|_{B_{2,q}^{\sigma+1}(\mathbb{R}^2)}\; dt  \lsim (\lambda^{-1+\delta})^2 ,\end{align*}
from where we deduce the lower bound
\begin{align*}
\left\| (E_1 + E_2)_1 \right\|_{B_{2,q}^{\sigma}(\mathbb{R}^2)} & \lsim \lambda^{-s+1+\sigma}(\lambda^{-1+\delta})^2   +  \lambda^{-s-1-\delta+\sigma}  + \lambda^{-s-1+\sigma}  + \lambda^{-s-2-\delta+\sigma} \lsim \lambda^{-s-1+2\delta+\sigma} .
\end{align*}

The second component of $E_1 + E_2$ is given by
\begin{align*}
(E_1 + E_2)_2(t,x) & =  \omega \lambda^{-s-1-2\delta} \phi' \left(\frac{x_1}{\lambda^\delta} \right)\phi\left(\frac{x_2}{\lambda^\delta} \right)\cos(\lambda x_2 - \omega t)\\
&  -  \lambda^{-s-1-3\delta} u_{1}^{l}(t,x) \phi'' \left(\frac{x_1}{\lambda^\delta} \right)\phi \left(\frac{x_2}{\lambda^\delta} \right)\sin(\lambda x_2 - \omega t)\\
& - \lambda^{-s-2\delta}u_{2}^{l}(t,x) \phi' \left(\frac{x_1}{\lambda^\delta} \right)\phi \left(\frac{x_2}{\lambda^\delta} \right)\cos(\lambda x_2 - \omega t) \\
& - \lambda^{-s-1-3\delta}u_{2}^{l}(t,x) \phi' \left(\frac{x_1}{\lambda^\delta} \right)\phi' \left(\frac{x_2}{\lambda^\delta} \right)\sin(\lambda x_2 - \omega t).
\end{align*}

Which we similarly bound as follows.
\begin{align*}
\left\|(E_1 + E_2)_2\right\|_{B_{2,q}^{\sigma}(\mathbb{R}^2)} & \le  \lambda^{-s-1-2\delta} \left\| \phi' \left(\frac{\cdot}{\lambda^\delta} \right)\right\|_{B_{2,q}^{\sigma}(\mathbb{R})} \left\|\ \phi\left(\frac{\cdot}{\lambda^\delta} \right)\cos(\lambda \cdot - \omega t) \right\|_{B_{2,q}^{\sigma}(\mathbb{R})}\\
& +  \lambda^{-s-1-3\delta} \|u_{1}^{l}(t,\cdot)\|_{B_{2,q}^{\sigma}(\mathbb{R}^2)} \left\| \phi'' \left(\frac{\cdot}{\lambda^\delta} \right)\phi \left(\frac{\cdot}{\lambda^\delta} \right)\sin(\lambda \cdot - \omega t)\right\|_{B_{2,q}^{\sigma}(\mathbb{R}^2)} \\
& + \lambda^{-s-2\delta} \|u_{2}^{l}(t,\cdot)\|_{B_{2,q}^{\sigma}(\mathbb{R}^2)} \left\| \phi' \left(\frac{\cdot}{\lambda^\delta} \right)\phi \left(\frac{\cdot}{\lambda^\delta} \right)\cos(\lambda \cdot - \omega t)\right\|_{B_{2,q}^{\sigma}(\mathbb{R}^2)}\\
& + \lambda^{-s-1-3\delta}\|u_{2}^{l}(t,\cdot)\|_{B_{2,q}^{\sigma}(\mathbb{R}^2)} \left\| \phi' \left(\frac{\cdot}{\lambda^\delta} \right)\phi' \left(\frac{\cdot}{\lambda^\delta} \right)\sin(\lambda \cdot - \omega t)\right\|_{B_{2,q}^{\sigma}(\mathbb{R}^2)} \\
& \lsim \lambda^{-s-1-\delta + \sigma} + \lambda^{-s-2-\delta+\sigma} + \lambda^{-s-1+\sigma}.
\end{align*}

Combining the estimates above we obtain
\begin{align*} \left\|E_1 + E_2\right\|_{B_{2,q}^{\sigma}(\mathbb{R}^2)} & \lsim \lambda^{-s-1+2\delta+\sigma} +  \lambda^{-s-1-\delta+\sigma}  + \lambda^{-s-1+\sigma}  + \lambda^{-s-2-\delta+\sigma} \lsim \lambda^{-s-1+2\delta+\sigma} .
\end{align*}

We use Lemma \ref{tlemnp}, estimate \eqref{eqn:hodge2}  and Sobolev-type embeddings to get
\begin{align*}
\|E_3 + E_5 \|_{B_{2,q}^{\sigma}(\mathbb{R}^2)} & = \left\|P(\nabla_{u^h}u^l) - (1-P)(\nabla_{u^h}u^l) \right\|_{B_{2,q}^{\sigma}(\mathbb{R}^2)}  \lsim \left\| \nabla_{u^h}u^l \right\|_{ B_{2,q}^{\sigma}(\mathbb{R}^2) } \\
& \lsim \|u^h \|_{ B_{2,q}^{\sigma}(\mathbb{R}^2) } \| u^{l}\|_{B_{2,q}^{\sigma+1}(\mathbb{R}^2)}  \lsim  \lambda^{-s+\sigma-1+\delta}.
\end{align*}

From the definition of $u^h$ and the estimate \eqref{eqn:moser} with $p_1 = p_3 = \infty$, $p_2 = p_4 = 2$, we obtain
\begin{align*}
\|E_4 + E_6 \|_{B_{2,q}^{\sigma}(\mathbb{R}^2)} & = \left\|P(\nabla_{u^h}u^h) \right\|_{B_{2,q}^{\sigma}(\mathbb{R}^2)} \le \left\| \nabla_{u^h}u^h \right\|_{B_{2,q}^{\sigma}(\mathbb{R}^2)}
\lsim \lambda^{-2s-2\delta + \sigma}.
\end{align*}

We conclude that for any $\sigma >1$ 
\begin{equation}\label{eqn:errest}
\Big\| \sum_{j=1}^{6} E_j \Big\|_{B_{2,q}^{\sigma}(\mathbb{R}^2)}  \lsim \lambda^{-(s+1-2\delta-\sigma)}+ \lambda^{-(s-\sigma+1-\delta)} + \lambda^{-(2s+2\delta-\sigma)} =: d_1(\lambda).
\end{equation}

We are in place to construct the exact solutions to \eqref{eqn:NLEE}.
\subsection{Exact Solutions}
We let $u_{\omega,\lambda}$ be the solution to equation \eqref{eqn:NLEE} with initial data $u_{\omega,\lambda}(0) = u^{\omega,\lambda}(0)$.  Also, let $v = u^{\omega,\lambda} - u_{\omega,\lambda}$ be the difference between the approximation and exact solution, respectively, and observe that $v$ solves the transport equation  
\begin{equation}\label{eqn:tran.sol} \partial_t v + (u^{\omega,\lambda} \cdot \nabla) v  = F \;\;\; \text{and} \;\;\; v(0) =0,
\end{equation}
where $F:= \displaystyle\sum_{j=1}^{6}E_j + P(\nabla_v v)   + (1- 2P)(\nabla_{v}u^{\omega,\lambda})$. 

We use estimates for solutions to a linear transport equation as convenient here. 
\begin{lemma}[See \cite{BCD}]\label{trans} Let $d/p < \sigma <1+d/p$, $1\le p, q \le \infty$ and assume that div $\mu =0$. There exists a constant $C=C(p,q,\sigma)$ so that for all solutions $f \in L^\infty ([0,T]; B_{p,q}^{s})$ of
\begin{align*}
\partial_t f+ (\mu \cdot \nabla) f& = F, \\
f(0) & = f_0,
\end{align*}
with $f_0 \in B_{p,q}^{s}$ and $F \in L^1\left( [0,T]; B_{p,q}^{s}\right)$,  we have for a.e. $t\in[0,T]$ 
$$\|f\|_{L^\infty_t(
B_{p,q}^{s})} \le e^{CV(t)}\left( \|f_0\|_{B_{p,q}^{s}} + \int_{0}^{t} \|F(\tau)\|_{B_{p,q}^{s}} \; d\tau\right),$$
where $V(t) = \displaystyle \int_{0}^{t}\|\nabla \mu (\tau)\|_{B_{p,\infty}^{d/p}\cap L^\infty}\; d\tau$.
\end{lemma}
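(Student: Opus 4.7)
The plan is to adapt the classical Littlewood--Paley and commutator method for transport equations. First, I would apply each dyadic block $\Delta_j := \varphi_j(D)$ to the equation to obtain
$$\partial_t(\Delta_j f) + (\mu \cdot \nabla)(\Delta_j f) = \Delta_j F + R_j, \qquad R_j := [\mu \cdot \nabla,\, \Delta_j] f.$$
For $1 \le p < \infty$, multiplying by $|\Delta_j f|^{p-2}\Delta_j f$, integrating over $\mathbb{R}^d$, and using $\text{div}\,\mu = 0$ to eliminate the transport contribution via integration by parts yields the pointwise-in-$t$ inequality
$$\frac{d}{dt}\|\Delta_j f(t)\|_{L^p} \le \|\Delta_j F(t)\|_{L^p} + \|R_j(t)\|_{L^p}.$$
The endpoint $p=\infty$ is handled by propagating $\Delta_j f$ along the characteristic flow of $\mu$, giving the same conclusion. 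Integrating in $t$ and multiplying by $2^{j\sigma}$ puts the estimate in a form suitable for taking an $\ell^q$-norm in $j$.

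The crux of the argument is the commutator estimate. Using Bony's paraproduct decomposition $\mu^k \partial_k f = T_{\mu^k}\partial_k f + T_{\partial_k f}\mu^k + R(\mu^k,\partial_k f)$ (summation over $k$), I would split $R_j$ into three pieces: the commutator with a paraproduct, plus the two remaining terms in which $\Delta_j$ lands on the low- or remainder-type factor. Standard paraproduct calculus then produces
$$2^{j\sigma}\|R_j(t)\|_{L^p} \le c_j(t)\, \|\nabla \mu(t)\|_{B_{p,\infty}^{d/p}\cap L^\infty}\,\|f(t)\|_{B_{p,q}^\sigma}, \qquad \big\|\{c_j(t)\}\big\|_{\ell^q} \le 1.$$
The upper constraint $\sigma < 1 + d/p$ is what allows the highest-derivative contribution to be absorbed by the weakest possible norm $\|\nabla \mu\|_{B_{p,\infty}^{d/p}\cap L^\infty}$; the lower bound $\sigma > d/p$ ensures that the pointwise product $\mu \cdot \nabla f$ is well defined and that the remainder term in Bony's decomposition converges with the appropriate tail. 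This commutator bound is the principal obstacle, as it requires careful quasi-orthogonality bookkeeping and is precisely where the range $d/p < \sigma < 1 + d/p$ enters in an essential way.

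Finally, I would take the $\ell^q$-norm in $j$ of the integrated inequality and apply Minkowski's inequality to interchange the sum over $j$ with the integral in $\tau$, obtaining
$$\|f(t)\|_{B_{p,q}^\sigma} \le \|f_0\|_{B_{p,q}^\sigma} + \int_0^t \|F(\tau)\|_{B_{p,q}^\sigma}\, d\tau + C\int_0^t \|\nabla \mu(\tau)\|_{B_{p,\infty}^{d/p}\cap L^\infty}\,\|f(\tau)\|_{B_{p,q}^\sigma}\, d\tau.$$
An application of Gronwall's inequality then converts the last integral term into the exponential factor $e^{CV(t)}$, yielding the claimed estimate.
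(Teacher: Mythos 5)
The paper does not prove this lemma at all---it is quoted directly from the cited reference [BCD] and used as a black box. Your sketch (dyadic localization, $L^p$ energy estimate using $\mathrm{div}\,\mu=0$, Bony-decomposition commutator bound with $\ell^q$ summation, then Gr\"onwall) is precisely the standard argument given in that reference, so it is correct and takes the same route as the paper's source.
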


We apply Lemma \ref{trans} to $v = u^{\omega,\lambda} - u_{\omega,\lambda}$ with $1 < \sigma <  \min\{2,s-1\} $  and obtain the estimate 
\begin{align*}
\|v\|_{B_{2,q}^{\sigma}(\mathbb{R}^2)} \le C e^{CV(t)}\int_{0}^{t}\|F(\tau)\|_{B_{2,q}^{\sigma}(\mathbb{R}^2)}\; d\tau .
\end{align*}

For any $t \in[0,1]$ Lemma \ref{tlemnp} and the Sobolev-type embedding give
\begin{align*} 
V(t)  & = \int_{0}^{t}\| \nabla u^{\omega,\lambda}(\tau)\|_{B_{2,\infty}^{1}\cap L^\infty(\mathbb{R}^2)} \; d\tau  \lsim \int_{0}^{t}\| u^{\omega,\lambda}(\tau)\|_{B_{2,\infty}^{\sigma+1}(\mathbb{R}^2)}\; d\tau \lsim \lambda^{-1+\delta} + \lambda^{-s+\sigma+1}.
\end{align*}

Using the triangle inequality together with estimates \eqref{eqn:hodge2} and \eqref{eqn:errest} we obtain 
$$\|F \|_{B_{2,q}^{\sigma}(\mathbb{R}^2)} \lsim d_1(\lambda) +  \| \nabla_v v\|_{B_{2,q}^{\sigma}(\mathbb{R}^2)}  + \| \nabla _v u^{\omega,\lambda}\|_{B_{2,q}^{\sigma}(\mathbb{R}^2)}.  $$

Using the estimates for the high and low frequencies we obtain
\begin{align*} 
\| \nabla_v v\|_{B_{2,q}^{\sigma}(\mathbb{R}^2)} & = \| (v\cdot\nabla) v\|_{B_{2,q}^{\sigma}(\mathbb{R}^2)}  \le  \sum_{j=1}^{2} \| v_j \partial_jv\|_{B_{2,q}^{\sigma}(\mathbb{R}^2)}  \lsim \sum_{j=1}^{2} \|  v_j\|_{B_{2,q}^{\sigma}(\mathbb{R}^2)}\| \partial_j v \|_{B_{2,q}^{\sigma}(\mathbb{R}^2)} \\ &  \lsim \| v\|_{B_{2,q}^{\sigma}(\mathbb{R}^2)} \left( \lambda^{-1+\delta} + \lambda^{-s+\sigma+1} \right).
\end{align*}

Similarly, 
\begin{align*} 
\| \nabla_{v} u^{\omega,\lambda}\|_{B_{2,q}^{\sigma}(\mathbb{R}^2)}  & = \| (v \cdot\nabla) u^{\omega,\lambda}\|_{B_{2,q}^{\sigma}(\mathbb{R}^2)} \le  \sum_{j=1}^{2} \| v_j \partial_ju^{\omega,\lambda}\|_{B_{2,q}^{\sigma}(\mathbb{R}^2)}  \lsim \sum_{j=1}^{2} \|  v_j\|_{B_{2,q}^{\sigma}(\mathbb{R}^2)}\| \partial_j u^{\omega,\lambda} \|_{B_{2,q}^{\sigma}(\mathbb{R}^2)} \\ &  \lsim \| v\|_{B_{2,q}^{\sigma}(\mathbb{R}^2)} \left( \lambda^{-1+\delta} + \lambda^{-s+\sigma+1} \right),
\end{align*}
so that collecting all these estimates we have
\begin{align*}
\|v\|_{B_{2,q}^{\sigma}(\mathbb{R}^2)} & \lsim e^{CV(t)}\int_{0}^{t}\|F(\tau)\|_{B_{2,q}^{\sigma}(\mathbb{R}^2)}\; d\tau \lsim  d_1(\lambda) + \int_{0}^{t} \left[ \lambda^{-(1-\delta)} + 1  \right]  \|  v \|_{B_{2,q}^{\sigma}(\mathbb{R}^2)} \;ds.
\end{align*}

Finally, an application of Gr\"onwall's inequality gives the desired estimate
\begin{align*} 
\|v\|_{B_{2,q}^{\sigma}(\mathbb{R}^2)} & \lsim d_1(\lambda)\int_{0}^{t}e^{(t-s)(\lambda^{-(1-\delta)} +1)} \; ds  \lsim d_1(\lambda), \end{align*}
uniformly for $t\in[0,1]$ and $\lambda \gg 1$ if $0< \delta <1$. We are now ready to prove Theorem \ref{main3}.

\subsection{Proof of Theorem \ref{main3}}
Let  $u_{+1,\lambda}(t)$ and $u_{-1,\lambda}(t)$ be two sequences of solutions to \eqref{eqn:NLEE} with corresponding initial data $u_{+1,\lambda}(0) = u^{+1,\lambda}(0)$ and $u_{-1,\lambda}(0) = u^{-1,\lambda}(0)$. Estimate \eqref{eqn:energy} gives
\begin{align*}
\| u_{\pm1,\lambda} (t)\|_{B_{2,q}^{s}(\mathbb{R}^2)} & \le \| u_{\pm1,\lambda} (0)\|_{B_{2,q}^{s}(\mathbb{R}^2)}  =    \|u^{\pm1,\lambda} (0)\|_{B_{2,q}^{s}(\mathbb{R}^2)} \\  & \lsim \| u^{l} (0)\|_{B_{2,q}^{s}(\mathbb{R}^2)} + \| u^{h} (0)\|_{B_{2,q}^{s}(\mathbb{R}^2)} \\
& \lsim \lambda^{-1+\delta} + \lambda^{-s+s} \\
& \lsim 1.
\end{align*}
for any $0 < \delta < 1$. The latter says that our solutions are confined to a ball in $B_{2,q}^{s}$, this is (i).\\

We use interpolation to measure the difference between the approximate and exact solutions and thus need the following estimates. For $0 < \delta < 1$ and any integer $k>\max\{2,s\}$ we obtain
\begin{align*} \| u^{\pm1,\lambda}(t) \|_{B_{2,q}^{k}(\mathbb{R}^2)} & \le \| u^{l}(t)\|_{B_{2,q}^{k}(\mathbb{R}^2)}  + \| u^{h}(t)\|_{B_{2,q}^{k}(\mathbb{R}^2)}  \lsim \lambda^{-1+\delta} + \lambda^{-s+k} \lsim \lambda^{-s+k}
\end{align*}

uniformly in $t\in [0,1]$, where we used Lemma \ref{tlemnp}. Repeating this estimate twice gives 
\begin{equation}\label{eqn:distapack} \| u^{\pm1,\lambda}(t) - u_{\pm1,\lambda}(t)\|_{B_{2,q}^{k}(\mathbb{R}^2)} \lsim \lambda^{-s+k}\end{equation}

uniformly in $t\in [0,1]$. Expanding $v$ and $d_1(\lambda)$ as defined in the previous subsection with $\omega = \pm 1$, $0<\delta<1$ and $1 < \sigma < \min\{2,s-1\}$ we see that
\begin{equation}\label{eqn:distapacsig}\|u^{\pm1,\lambda}(t) - u_{\pm1,\lambda}(t) \|_{B_{2,q}^{\sigma}(\mathbb{R}^2)} \lsim \lambda^{-(s+1-2\delta-\sigma)}+ \lambda^{-(s-\sigma+1-\delta)} + \lambda^{-(2s+2\delta-\sigma)}. \end{equation}

In what follows fix $s > 2$, $0 < \delta <1/2$, and choose $1 < \sigma < \min\{2,s-1\}$. \\

We let $s_1 := \sigma $ and $s_2 := k > s$, and define $\theta \in (0,1)$ by $s = \theta \sigma + (1-\theta)k$. Interpolation inequality \eqref{eqn:interp} and the estimates \eqref{eqn:distapack} and \eqref{eqn:distapacsig} gives
\begin{equation}\label{eqn:finaldiff} 
\begin{aligned}
\|u^{\pm1,\lambda}(t) - u_{\pm1,\lambda}(t) \|_{B_{2,q}^{s}(\mathbb{R}^2)} & \lsim \|u^{\pm1,\lambda}(t) - u_{\pm1,\lambda}(t) \|_{B_{2,q}^{\sigma}(\mathbb{R}^2)}^\theta \; \|u^{\pm1,\lambda}(t) - u_{\pm1,\lambda}(t) \|_{B_{2,q}^{k}(\mathbb{R}^2)}^{1-\theta}\\ 
& \lsim \left[ \lambda^{-(s+1-2\delta-\sigma)}+ \lambda^{-(s-\sigma+1-\delta)} + \lambda^{-(2s+2\delta-\sigma)} \right]^{\frac{k - s}{k - \sigma}}\left( \lambda^{-s+k} \right)^{\frac{s-\sigma}{k-\sigma}}\\
&= \bigg( \lambda^{-(1-2\delta)}+ \lambda^{-(1-\delta)} + \lambda^{-(s+2\delta)}\bigg)^{\frac{k - s}{k - \sigma}} \\ 
& =: d_2(\lambda) \to 0 \; \text{as} \; \lambda \to \infty. 
\end{aligned}
\end{equation}

Next we make sure that the exact solutions converge at the initial time, this is (ii). Using $u_{\pm1,\lambda}(0) := u^{\pm1,\lambda}(0) $, estimate \eqref{eqn:energy} and Lemma \ref{tlemnp} we get 
\begin{align*}
\| u_{+1,\lambda}(0) - u_{-1,\lambda}(0)\|_{B_{2,q}^{s}(\mathbb{R}^2)}
& \lsim \| u^{+1,l}(0)\|_{B_{2,q}^{s}(\mathbb{R}^2)} + \| u^{-1,l}(0)\|_{B_{2,q}^{s}(\mathbb{R}^2)}\\ 
& \lsim \lambda^{-1} \| \psi_1 \left(\frac{\cdot}{\lambda^\delta}\right)  \|_{B_{2,q}^{s}(\mathbb{R})} \|  \psi_2' \left(\frac{\cdot}{\lambda^\delta}\right)  \|_{B_{2,q}^{s}(\mathbb{R})}  \\
& + \lambda^{-1} \| \psi_1' \left(\frac{\cdot}{\lambda^\delta}\right)  \|_{B_{2,q}^{s}(\mathbb{R})} \|  \psi_2 \left(\frac{\cdot}{\lambda^\delta}\right)  \|_{B_{2,q}^{s}(\mathbb{R})} \\ 
& \lsim \lambda^{-1+\delta} \rightarrow 0\;\; \text{as} \;\; \lambda \to \infty.
\end{align*}

Finally, we make sure that the exact solutions are separated at later times, this is (iii). Let $t \in (0,1]$, using the triangle inequality and the estimate \eqref{eqn:finaldiff} we see that the key estimate for separating the exact solutions is given by bounding away the approximate solutions, i.e.
\begin{align*}
\| u_{+1,\lambda}(t) - u_{-1,\lambda}(t)\|_{B_{2,q}^{s}(\mathbb{R}^2)} & \ge \| u^{+1,\lambda}(t) - u^{-1,\lambda}(t)\|_{B_{2,q}^{s}(\mathbb{R}^2)}  - \| u^{+1,\lambda}(t) - u_{+1,\lambda}(t)\|_{B_{2,q}^{s}(\mathbb{R}^2)} \\
& - \| u^{-1,\lambda}(t) - u_{-1,\lambda}(t)\|_{B_{2,q}^{s}(\mathbb{R}^2)} \\ 
& \gsim \| u^{+1,\lambda}(t) - u^{-1,\lambda}(t)\|_{B_{2,q}^{s}(\mathbb{R}^2)} - d_2(\lambda).
\end{align*}

The definition of the approximate solution and the triangle inequality gives
\begin{align*}
\| u^{+1,\lambda}(t,\cdot) - u^{-1,\lambda}(t,\cdot)\|_{B_{2,q}^{s}(\mathbb{R}^2)} & \ge \| u^{+1,h}(t,\cdot) - u^{-1,h}(t,\cdot)\|_{B_{2,q}^{s}(\mathbb{R}^2)} \\
& -\| u^{+1,l}(t,\cdot)\|_{B_{2,q}^{s}(\mathbb{R}^2)} - \| u^{-1,l}(t,\cdot)\|_{B_{2,q}^{s}(\mathbb{R}^2)} \\
& \gsim \| u^{+1,h}(t,\cdot) - u^{-1,h}(t,\cdot)\|_{B_{2,q}^{s}(\mathbb{R}^2)} - \lambda^{-1+\delta}.
\end{align*}

Using the definition of $u^{\pm1,h}$ and the angle sum formula we write $u^{+1,h}(t,x) - u^{-1,h}(t,x)$ as follows
\begin{align*}
 2\sin t \; & \bigg( \lambda^{-\delta -s }\phi \left(\frac{x_1}{\lambda^\delta}\right) \phi \left(\frac{x_2}{\lambda^\delta}\right) \sin(\lambda x_2)  - \lambda^{-2\delta -s -1}\phi \left(\frac{x_1}{\lambda^\delta}\right)\phi ' \left(\frac{x_2}{\lambda^\delta}\right)
\cos(\lambda x_2),\\
&  \lambda^{-2\delta -s -1}\phi ' \left(\frac{x_1}{\lambda^\delta}\right)\phi \left(\frac{x_2}{\lambda^\delta}\right)\cos(\lambda x_2) \bigg).
\end{align*}

Applying Lemma \ref{tlemnp} and the triangle inequality gives
\begin{align*} 
\| u^{+1,h}(t,\cdot) - u^{-1,h}(t,\cdot)\|_{B_{2,q}^{s}(\mathbb{R}^2)} 
&  \gsim \lambda^{-\delta-s} \sin t \left\|  \phi \left(\frac{\cdot}{\lambda^\delta}\right)  \phi \left(\frac{\cdot}{\lambda^\delta}\right)\sin (\lambda \cdot) \right\|_{B_{2,q}^{s}(\mathbb{R}^2)} \\
&- \lambda^{-2\delta-s-1} \sin t \left\|  \phi \left(\frac{\cdot}{\lambda^\delta}\right)  \phi' \left(\frac{\cdot}{\lambda^\delta}\right)\cos (\lambda \cdot) \right\|_{B_{2,q}^{s}(\mathbb{R}^2)}\\
& - \lambda^{-2\delta-s-1} \sin t \left\|  \phi' \left(\frac{\cdot}{\lambda^\delta}\right) \phi \left(\frac{\cdot}{\lambda^\delta}\right)\cos (\lambda \cdot)\right\|_{B_{2,q}^{s}(\mathbb{R}^2)}\\
& \gsim  \lambda^{-\delta-s} \sin t \cdot \lambda^{s+\delta}  - \lambda^{-2\delta-s-1} \cdot \lambda^{\delta/2} \cdot \lambda^{s+\delta/2}.
\end{align*}

Collecting all the estimates above gives, 
\begin{align*}
\| u_{+1,\lambda}(t,\cdot) - u_{-1,\lambda}(t,\cdot)\|_{B_{2,q}^{s}(\mathbb{R}^2)} &\gsim \| u^{+1,\lambda}(t,\cdot) - u^{-1,\lambda}(t,\cdot)\|_{B_{2,q}^{s}(\mathbb{R}^2)} -d_2(\lambda) \\
& \gsim \| u^{+1,h}(t,\cdot) - u^{-1,h}(t,\cdot)\|_{B_{2,q}^{s}(\mathbb{R}^2)} - \lambda^{-1+\delta}- d_2(\lambda)\\
& \gsim \sin t  - \lambda^{-\delta-1} - \lambda^{-1+\delta} - d_2(\lambda).
\end{align*}

From where we conclude
$$ \liminf_{\lambda \to \infty} \|u_{+1,\lambda}(t,\cdot) - u_{-1,\lambda}(t,\cdot)\|_{B_{2,q}^{s}(\mathbb{R}^2)} \gsim \sin t  ,\;\;\;\; \forall t \in (0,1].$$

This completes the proof in the case $d=2$.

For the case $d=3$ the same argument provides the result for $s > 1 + 3/2$ upon taking the high and low frequencies to be 
$$ u^{h}(t,x) = (\partial_2,-\partial_1,0)\lambda^{-3\delta/2-s-1}\prod_{i\in[3]}\phi\left(\frac{x_i}{\lambda^{\delta}}\right)\sin(\lambda x_2 - \omega t)$$
and 
$$ u^{l}(t,x) = (-\omega \partial_2,\omega\partial_1,0)\; \lambda^{-1+\delta}\prod_{i\in[3]}\psi_i\left(\frac{x_i}{\lambda^{\delta}}\right).$$

Where $\phi$ and the $\psi_i$ would be appropriately defined as before with the condition $\psi_1' =  \psi_2 = \psi_3 =1$ on the support of $\phi$. 


\end{document}